\newtheorem{thm}{Theorem}[section]
\newtheorem{cor}[thm]{Corollary}
\newtheorem{lem}[thm]{Lemma}
\newtheorem{definition}[thm]{Definition}
\newtheorem{con}[thm]{Conjecture}
\journal{arXiv}
\begin{document}

\begin{frontmatter}

\title{Some spectral properties of uniform hypergraphs}
\author[label1,label2]{Jiang Zhou}\ead{zhoujiang04113112@163.com}
\author[label3]{Lizhu Sun}
\author[label1]{Wenzhe Wang}
\author[label1]{Changjiang Bu}\ead{buchangjiang@hrbeu.edu.cn}

\address[label1]{College of Science, Harbin Engineering University, Harbin 150001, PR China}
\address[label2]{College of Computer Science and Technology, Harbin Engineering University, Harbin 150001, PR China}
\address[label3]{School of Science, Harbin Institute of Technology, Harbin 150001, PR China}

\begin{abstract}
For a $k$-uniform hypergraph $H$, we obtain some trace formulas for the Laplacian tensor of $H$, which imply that $\sum_{i=1}^nd_i^s$ ($s=1,\ldots,k$) is determined by the Laplacian spectrum of $H$, where $d_1,\ldots,d_n$ is the degree sequence of $H$. Using trace formulas for the Laplacian tensor, we obtain expressions for some coefficients of the Laplacian polynomial of a regular hypergraph. We give some spectral characterizations of odd-bipartite hypergraphs, and give a partial answer to a question posed by Shao et al \cite{ShaoShanWu}. We also give some spectral properties of power hypergraphs, and show that a conjecture posed by Hu et al \cite{HuQiShao} holds under certain conditons.
\end{abstract}

\begin{keyword}
Hypergraph eigenvalue, Adjacency tensor, Laplacian tensor, Signless Laplacian tensor, Power hypergraph\\
\emph{AMS classification:} 05C65, 15A69, 15A18
\end{keyword}

\end{frontmatter}

\section{Introduction}
Recently, the research on spectral theory of hypergraphs has attracted extensive attention [1,5-8,11,13,14,16-18]. We first introduce some necessary concepts and notations. For a positive integer $n$, let $[n]=\{1,\ldots,n\}$. An order $k$ dimension $n$ tensor $\mathcal{A}=(a_{i_1\cdots i_k})\in\mathbb{C}^{n\times\cdots\times n}$ is a multidimensional array with $n^k$ entries, where $i_j\in[n]$, $j=1,\ldots,k$. We sometimes write $a_{i_1\cdots i_k}$ as $a_{i_1\alpha}$, where $\alpha=i_2\cdots i_k$. When $k=1$, $\mathcal{A}$ is a column vector of dimension $n$. When $k=2$, $\mathcal{A}$ is an $n\times n$ matrix. The \textit{unit tensor} of order $k\geqslant2$ and dimension $n$ is a diagonal tensor $\mathcal{I}_n=(\delta_{i_1i_2\cdots i_k})$ such that $\delta_{i_1i_2\cdots i_k}=1$ if $i_1=i_2=\cdots=i_k$, and $\delta_{i_1i_2\cdots i_k}=0$ otherwise. In \cite{Shao-product}, Shao defined the following product of tensors, which is a generalization of the matrix multiplication.
\begin{definition}\label{definition1.1}\textup{\cite{Shao-product}}
Let $\mathcal{A}$ and $\mathcal{B}$ be order $m\geqslant2$ and order $k\geqslant1$, dimension $n$ tensors, respectively. The product $\mathcal{A}\mathcal{B}$ is the following tensor $\mathcal{C}$ of order $(m-1)(k-1)+1$ and dimension $n$ with entries
\begin{eqnarray*}
c_{i\alpha_1\ldots \alpha_{m-1}}=\sum_{i_2,\ldots,i_m\in[n]}a_{ii_2\ldots i_m}b_{i_2\alpha_1}\cdots b_{i_m\alpha_{m-1}}~(i\in[n],~\alpha_1,\ldots,\alpha_{m-1}\in[n]^{k-1}).
\end{eqnarray*}
\end{definition}

Let $\mathcal{A}$ be an order $k\geqslant2$ dimension $n$ tensor, and let $x=(x_1,\ldots,x_n)^\top$. From Definition \ref{definition1.1}, the product $\mathcal{A}x$ is a vector in $\mathbb{C}^n$ whose $i$-th component is (see Example 1.1 in \cite{Shao-product})
\begin{eqnarray*}
(\mathcal{A}x)_i=\sum_{i_2,\ldots,i_k\in[n]}a_{ii_2\cdots i_k}x_{i_2}\cdots x_{i_k}.
\end{eqnarray*}
The concept of tensor eigenvalues was posed in \cite{Lim,Qi05}. If there exists a nonzero vector $x\in\mathbb{C}^n$ such that $\mathcal{A}x=\lambda x^{[k-1]}$, then $\lambda$ is called an \textit{eigenvalue} of $\mathcal{A}$, $x$ is an \textit{eigenvector} of $\lambda$, where $x^{[k-1]}=(x_1^{k-1},\ldots,x_n^{k-1})^\top$. The \textit{determinant} of $\mathcal{A}$, denoted by $\det(\mathcal{A})$, is the resultant of the system of polynomials $f_i(x_1,\ldots,x_n)=(\mathcal{A}x)_i$ ($i=1,\ldots,n$). The \textit{characteristic polynomial} of $\mathcal{A}$ is defined as $\phi_{\mathcal{A}}(\lambda)=\det(\lambda\mathcal{I}_n-\mathcal{A})$, where $\mathcal{I}_n$ is the unit tensor of order $k$ and dimension $n$. It is known that eigenvalues of $\mathcal{A}$ are exactly roots of $\phi_{\mathcal{A}}(\lambda)$ \cite{Qi05}. The multiset of roots of $\phi_{\mathcal{A}}(\lambda)$ (counting multiplicities) is the \textit{spectrum} of $\mathcal{A}$, denoted by $\sigma(\mathcal{A})$. The maximal modulus of eigenvalues of $\mathcal{A}$ is called the \textit{spectral radius} of $\mathcal{A}$, denoted by $\rho(\mathcal{A})$. More details on eigenvalues and characteristic polynomials of tensors can be found in \cite{Hu2013,Qi05}.

A hypergraph $H$ is called $k$-\textit{uniform} if each edge of $H$ contains exactly $k$ distinct vertices. Let $V(H)$ and $E(H)$ denote the vertex set and the edge set of $H$, respectively. In \cite{Qi-Laplaican}, Qi defined the Laplacian and the signless Laplacian tensor of a uniform hypergraph as follows.
\begin{definition}\textup{\cite{HuQiShao,Qi-Laplaican}}
The \textit{adjacency tensor} of a $k$-uniform hypergraph $H$, denoted by $\mathcal{A}_H$, is an order $k$ dimension $|V(H)|$ tensor with entries
\begin{eqnarray*}
a_{i_1i_2\cdots i_k}=\begin{cases}\frac{1}{(k-1)!}~~~~~~~\mbox{if}~i_1i_2\cdots i_k\in E(H),\\
0~~~~~~~~~~~~~\mbox{otherwise}.\end{cases}
\end{eqnarray*}
Let $\mathcal{D}_H$ be an order $k$ dimension $|V(H)|$ diagonal tensor whose diagonal entries are vertex degrees of $H$. The tensors $\mathcal{L}_H=\mathcal{D}_H-\mathcal{A}_H$ and $\mathcal{Q}_H=\mathcal{D}_H+\mathcal{A}_H$ are the \textit{Laplacian tensor} and the \textit{signless Laplacian tensor} of $H$, respectively. Eigenvalues of $\mathcal{A}_H$, $\mathcal{L}_H$ and $\mathcal{Q}_H$ are called eigenvalues, Laplacian eigenvalues and signless Laplacian eigenvalues of $H$, respectively. Characteristic polynomials of $\mathcal{L}_H$ and $\mathcal{Q}_H$ are called Laplacian polynomial and signless Laplacian polynomial of $H$, respectively.
\end{definition}
This paper is organized as follows. In Section 2, we give some trace formulas for the Laplacian tensor of a uniform hypergraph, and obtain expressions for some coefficients of the Laplacian polynomial of a regular hypergraph. In Section 3, we give some spectral characterizations of odd-bipartite hypergraphs. In Section 4, we give some spectral properties of power hypergraphs.
\section{Laplacian spectra and degree sequence of hypergraphs}
Traces of tensors are very useful in the study of spectral theory of tensors. The \textit{d-th order trace} of an order $k\geqslant2$ dimension $n$ tensor $\mathcal{T}=(t_{i_1\cdots i_k})$ is defined as \cite{Cooper12,Hu2013,Morozov}
\begin{eqnarray*}
Tr_d(\mathcal{T})=(k-1)^{n-1}\sum_{d_1+\cdots+d_n=d}\prod_{i=1}^n\frac{1}{(d_i(k-1))!}\left(\sum_{y\in[n]^{k-1}}t_{iy}\frac{\partial}{\partial a_{iy}}\right)^{d_i}\mbox{tr}(A^{d(k-1)}),
\end{eqnarray*}
where $A=(a_{ij})$ is an $n\times n$ auxiliary matrix, and $\frac{\partial}{\partial a_{iy}}=\frac{\partial}{\partial a_{ii_2}}\cdots\frac{\partial}{\partial a_{ii_k}}$ if $y=i_2\cdots i_k$. The codegree $d$ coefficient of the characteristic polynomial of $\mathcal{T}$ can be expressed in terms of $Tr_1(\mathcal{T}),\ldots,Tr_d(\mathcal{T})$ (see [4, Theorem 6.3]). It is also known that $Tr_t(\mathcal{T})=\sum_{\lambda\in\sigma(\mathcal{T})}\lambda^t$ for any $t\in[n(k-1)^{n-1}]$ (see [4, Theorem 6.10]). Hence $Tr_d(\mathcal{T})$ is an important invariant in the spectral theory of tensors.

Shao et al \cite{ShaoQiHu} give a graph theoretical formula for $Tr_d(\mathcal{T})$. In order to describe this formula, we introduce some notations in \cite{ShaoQiHu}. For an integer $d>0$, we define
\begin{eqnarray*}
\mathcal{F}_d=\{(i_1\alpha_1,\ldots,i_d\alpha_d)|1\leqslant i_1\leqslant\cdots\leqslant i_d\leqslant n;~\alpha_1,\ldots,\alpha_d\in[n]^{k-1}\}.
\end{eqnarray*}

For $F=(i_1\alpha_1,\ldots,i_d\alpha_d)\in\mathcal{F}_d$ and an order $k\geqslant2$ dimension $n$ tensor $\mathcal{T}=(t_{i_1\cdots i_k})$, we write $\pi_F(\mathcal{T})=\prod_{j=1}^dt_{i_j\alpha_j}$. Let $p_i(F)$ be the total number of times that the index $i$ appears in $F$. If $p_i(F)$ is a multiple of $k$ for any $i\in[n]$, then $F$ is called \textit{k-valent}.

\begin{definition}\label{definition2.1}\textup{\cite{ShaoQiHu}}
Let $F=(i_1\alpha_1,\ldots,i_d\alpha_d)\in\mathcal{F}_d$, where $i_j\alpha_j\in[n]^k$, $j=1,\ldots,d$. Then\\
(1) Let $E(F)=\bigcup_{j=1}^dE_j(F)$, where $E_j(F)$ is the arc multi-set
\begin{eqnarray*}
E_j(F)=\{(i_j,v_1),\ldots,(i_j,v_{k-1})\}
\end{eqnarray*}
if $\alpha_j=v_1\cdots v_{k-1}$.\\
(2) Let $b(F)$ be the product of the factorials of the multiplicities of all the arcs of $E(F)$.\\
(3) Let $c(F)$ be the product of the factorials of the outdegrees of all the vertices in the arc multi-set $E(F)$.\\
(4) Let $W(F)$ be the set of all closed walks $W$ with the arc multi-set $E(F)$.
\end{definition}
Shao et al give a graph theoretical formula for $Tr_d(\mathcal{T})$ as follows (see equation (3.5) in \cite{ShaoQiHu}).
\begin{lem}\textup{\cite{ShaoQiHu}}\label{lem2.2}
Let $\mathcal{T}=(T_{i_1\cdots i_k})$ be an order $k\geqslant2$ dimension $n$ tensor. Then
\begin{eqnarray*}
Tr_d(\mathcal{T})=(k-1)^{n-1}\sum_{F\in\mathcal{F}_d^\prime}\frac{b(F)}{c(F)}\pi_F(\mathcal{T})|W(F)|,
\end{eqnarray*}
where $\mathcal{F}_d^\prime=\{F|F\in\mathcal{F}_d,~F~\mbox{is k-valent}\}$.
\end{lem}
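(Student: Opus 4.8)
The plan is to start from the defining formula for $Tr_d(\mathcal{T})$ recalled above and to carry out all the differentiations explicitly. First I would expand the auxiliary trace combinatorially: since $\mathrm{tr}(A^{m})=\sum_{v_1,\dots,v_m\in[n]}a_{v_1v_2}a_{v_2v_3}\cdots a_{v_mv_1}$, each monomial of $\mathrm{tr}(A^{d(k-1)})$ equals $a^{W}:=\prod_{e}a_{e}$, the product over the arcs $e$ (with multiplicity) of a closed walk $W$ of length $d(k-1)$ in the complete digraph with loops on vertex set $[n]$; thus $\mathrm{tr}(A^{d(k-1)})=\sum_{W}a^{W}$, the sum ranging over all such closed walks.

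Next I would expand the differential operators. For a fixed composition $d_1+\cdots+d_n=d$, each factor $\big(\sum_{y\in[n]^{k-1}}t_{iy}\frac{\partial}{\partial a_{iy}}\big)^{d_i}$ — the entries $t_{iy}$ being constants with respect to $A$ — expands into $\sum t_{i\beta_1}\cdots t_{i\beta_{d_i}}\,\frac{\partial}{\partial a_{i\beta_1}}\cdots\frac{\partial}{\partial a_{i\beta_{d_i}}}$ over ordered tuples $(\beta_1,\dots,\beta_{d_i})\in([n]^{k-1})^{d_i}$. Listing these choices for $i=1,\dots,n$ consecutively sets up a bijection, as the composition also varies, between all such data and the set $\mathcal{F}_d$: the element $F=(i_1\alpha_1,\dots,i_d\alpha_d)$ has its block of pairs with first coordinate $i$ equal to the tuple chosen for $i$ (so $d_i$ is the number of pairs of $F$ with first coordinate $i$). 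Under this correspondence the scalar coefficient attached to $F$ is $\prod_{j=1}^{d}t_{i_j\alpha_j}=\pi_F(\mathcal{T})$, and the accompanying differential operator is $\prod_{i,j}\big(\frac{\partial}{\partial a_{ij}}\big)^{g_{ij}}$, where $g_{ij}$ is the multiplicity of the arc $(i,j)$ in $E(F)$ (arcs with tail $i$ come only from the block of $F$ indexed by $i$). Hence the outer sum over compositions, combined with these operator expansions, collapses to a single sum over $F\in\mathcal{F}_d$ with no extra multinomial weight.

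Then I would apply $\prod_{i,j}\big(\frac{\partial}{\partial a_{ij}}\big)^{g_{ij}}$ to each monomial $a^{W}$. Since $\sum_{i,j}g_{ij}=d(k-1)$ equals the degree of $a^{W}$, the result is a nonzero constant exactly when the arc multiset of $W$ coincides with $E(F)$, in which case it equals $\prod_{i,j}g_{ij}!=b(F)$, and it vanishes otherwise; summing over $W$ therefore produces the factor $b(F)\,|W(F)|$. Collecting everything, the contribution of $F$ to $Tr_d(\mathcal{T})$ is $(k-1)^{n-1}\big(\prod_{i=1}^{n}\frac{1}{(d_i(k-1))!}\big)\,\pi_F(\mathcal{T})\,b(F)\,|W(F)|$, and since the outdegree of vertex $i$ in $E(F)$ equals $(k-1)d_i$ we have $\prod_{i=1}^{n}(d_i(k-1))!=c(F)$, giving the claimed coefficient $b(F)/c(F)$.

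Finally I would check that the sum may be restricted to $k$-valent $F$. In any closed walk each vertex has equal in- and out-degree; for $F$ the outdegree of $i$ in $E(F)$ is $(k-1)d_i$, whereas its indegree is $p_i(F)-d_i$, so $|W(F)|\neq 0$ forces $p_i(F)=kd_i$ for every $i$, whence $F$ is $k$-valent — so every non-$k$-valent $F$ contributes $0$. The delicate point is the bookkeeping in the second step: one has to see that the ordering condition $i_1\leqslant\cdots\leqslant i_d$ in the definition of $\mathcal{F}_d$ exactly removes the freedom to permute the $n$ operator blocks among themselves while leaving intact the order inside each block, so that the expansion of $\prod_i\big(\sum_y t_{iy}\frac{\partial}{\partial a_{iy}}\big)^{d_i}$ is a genuine bijection onto $\mathcal{F}_d$ and no overcounting factor slips in; once that is secured, matching $b(F)$ and $|W(F)|$ to the evaluation of the derivatives and to the walk count is routine.
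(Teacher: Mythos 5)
This lemma is not proved in the paper: it is quoted verbatim from Shao, Qi and Hu (their equation (3.5)), so there is no internal proof to compare against. Your derivation from the Morozov--Shakirov definition of $Tr_d$ is correct and is essentially the argument of that reference: the bijection between the expanded differential monomials and $\mathcal{F}_d$ (with the ordering condition $i_1\leqslant\cdots\leqslant i_d$ absorbing the sum over compositions), the evaluation of the derivatives on $\mathrm{tr}(A^{d(k-1)})$ yielding $b(F)\,|W(F)|$, the identification of $\prod_{i}(d_i(k-1))!$ with $c(F)$, and the in/out-degree balance argument showing that non-$k$-valent $F$ have $W(F)=\emptyset$ are all handled correctly.
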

For a $k$-uniform hypergraph $H$, Cooper and Dutle \cite{Cooper12} proved that $Tr_d(\mathcal{A}_H)=0$ for $d\in[k-1]$. We give some trace formulas for the Laplacian (signless Laplacian) tensor of uniform hypergraphs as follows.
\begin{thm}\label{thm2.3}
Let $H$ be a $k$-uniform hypergraph with degree sequence $d_1,\ldots,d_n$. Then
\begin{eqnarray*}
Tr_t(\mathcal{L}_H)&=&Tr_t(\mathcal{Q}_H)=(k-1)^{n-1}\sum_{i=1}^nd_i^t,~t=1,\ldots,k-1,\\
Tr_k(\mathcal{L}_H)&=&(-1)^kk^{k-1}(k-1)^{n-k}|E(H)|+(k-1)^{n-1}\sum_{i=1}^nd_i^k,\\
Tr_k(\mathcal{Q}_H)&=&k^{k-1}(k-1)^{n-k}|E(H)|+(k-1)^{n-1}\sum_{i=1}^nd_i^k.
\end{eqnarray*}
\end{thm}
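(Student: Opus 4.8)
The plan is to apply the graph-theoretic trace formula of Lemma~\ref{lem2.2} to $\mathcal{T}=\mathcal{L}_H$ and to $\mathcal{T}=\mathcal{Q}_H$, and to pin down exactly which $F\in\mathcal{F}_d'$ contribute. First note that a factor $t_{i_j\alpha_j}$ of $\pi_F(\mathcal{L}_H)$ (resp. of $\pi_F(\mathcal{Q}_H)$) is nonzero in only two ways: either $i_j\alpha_j$ is a \emph{diagonal} index, meaning all $k$ of its entries equal one vertex $v$, and then the factor is $d_v$; or $\alpha_j$ lists $k-1$ distinct vertices all different from $i_j$ with $\{i_j\}\cup\alpha_j\in E(H)$, and then the factor is $-\tfrac1{(k-1)!}$ for $\mathcal{L}_H$ and $\tfrac1{(k-1)!}$ for $\mathcal{Q}_H$. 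Call such a tuple \emph{diagonal-type} or \emph{edge-type} accordingly, and let $m(F)$ be the number of edge-type tuples of $F$. I will also use that $|W(F)|\neq 0$ forces $E(F)$ to be connected and balanced (equal in- and out-degree at each vertex), since a closed walk with arc multiset $E(F)$ is an Eulerian circuit of the multidigraph $E(F)$ (with a marked starting position).

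The crux is a structural dichotomy for such $F$. For a vertex $v$ let $h_v$ be the number of edge-type tuples of $F$ with head $v$; an edge-type tuple with head $v$ contributes $k-1$ arcs leaving $v$, a tail occurrence of $v$ in an edge-type tuple contributes one arc entering $v$, and a diagonal-type tuple contributes only loops (which do not disturb the balance). Equating in- and out-degrees at $v$ in $E(F)$ and simplifying gives
\begin{eqnarray*}
k\,h_v=\bigl|\{\,j:\ j\text{ edge-type},\ v\in e_j\,\}\bigr|,
\end{eqnarray*}
$e_j$ being the edge carried by the $j$-th tuple. The right side is at most $m(F)$, so any vertex with $h_v\geq 1$ forces $m(F)\geq k$. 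Hence: if $d=|F|\leq k-1$ then $F$ has no edge-type tuple; and if $d=k$ then $m(F)\in\{0,k\}$, and when $m(F)=k$ the same identity forces $h_v=1$ for each of the $k$ heads with $v\in e_j$ for all $j$, so all the $e_j$ are one common edge $e$ whose vertex set is exactly the set of heads.

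It remains to list the contributing $F$ and extract the weights. When $m(F)=0$ — the only case for $t\leq k-1$, and one of two cases for $t=k$ — connectedness of $E(F)$ puts all the diagonal tuples at a single vertex $v$; such $F$ is $k$-valent (indeed $p_v(F)=tk$), $E(F)$ is $(k-1)t$ copies of the loop at $v$, so $b(F)=c(F)=\bigl((k-1)t\bigr)!$ and $|W(F)|=1$, while $\pi_F(\mathcal{L}_H)=\pi_F(\mathcal{Q}_H)=d_v^{\,t}$; summing over $v$ gives $Tr_t(\mathcal{L}_H)=Tr_t(\mathcal{Q}_H)=(k-1)^{n-1}\sum_i d_i^{\,t}$ for $t\leq k-1$, and the summand $(k-1)^{n-1}\sum_i d_i^{\,k}$ for $t=k$. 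When $m(F)=k$ (so $t=k$), $F$ consists of a single edge $e$ together with an independent choice, for each $v\in e$, of an ordering $\alpha_v$ of $e\setminus\{v\}$; there are $\bigl((k-1)!\bigr)^k$ such $F$ per edge, each $k$-valent (each $v\in e$ occurs once as a head and $k-1$ times as a tail), with $\pi_F(\mathcal{L}_H)=(-1)^k\bigl((k-1)!\bigr)^{-k}$, $\pi_F(\mathcal{Q}_H)=\bigl((k-1)!\bigr)^{-k}$, $E(F)=\overrightarrow{K_k}$ on $e$, hence $b(F)=1$ and $c(F)=\bigl((k-1)!\bigr)^k$. Everything then comes down to the single number $|W(F)|$ for this configuration.

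The main obstacle is that last count together with the factorial bookkeeping. Regarding a closed walk as a sequence, $|W(F)|$ is the number of arcs of $\overrightarrow{K_k}$, namely $k(k-1)$, times the number of Eulerian circuits of $\overrightarrow{K_k}$; by the BEST theorem the latter is $k^{k-2}\bigl((k-2)!\bigr)^k$, where $k^{k-2}$ is the number of spanning arborescences of $\overrightarrow{K_k}$ rooted at a fixed vertex (from the matrix-tree theorem) and $\bigl((k-2)!\bigr)^k=\prod_{v}(\deg^+(v)-1)!$; thus $|W(F)|=k^{k-1}(k-1)\bigl((k-2)!\bigr)^k$. Substituting back, the $\bigl((k-1)!\bigr)^k$ choices of the $\alpha_v$ and the weight $b(F)/c(F)=\bigl((k-1)!\bigr)^{-k}$ collapse and leave $(-1)^k k^{k-1}(k-1)^{n-k}|E(H)|$ in the Laplacian case and $k^{k-1}(k-1)^{n-k}|E(H)|$ in the signless case; adding the diagonal part $(k-1)^{n-1}\sum_i d_i^{\,k}$ gives the stated formulas. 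As a cross-check, the edge part of $Tr_k(\mathcal{L}_H)$ equals $(-1)^k Tr_k(\mathcal{A}_H)$, and $Tr_k(\mathcal{A}_H)=k^{k-1}(k-1)^{n-k}|E(H)|$ can likewise be obtained from Lemma~\ref{lem2.2} or read off from \cite{Cooper12}.
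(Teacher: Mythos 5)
Your proof is correct and follows the same overall strategy as the paper: apply Lemma~\ref{lem2.2}, observe that a contributing $F$ consists either entirely of diagonal tuples concentrated at a single vertex or (when $t=k$) entirely of edge-type tuples carried by a single edge, and evaluate the two kinds of contributions separately. You differ from the paper in two places, both to your credit. First, the paper simply asserts the structural dichotomy, whereas you derive it from the in/out-degree balance condition $k\,h_v=\#\{j: v\in e_j\}$ forced by $|W(F)|\neq 0$; this is a cleaner and more complete justification. Second, for the edge contribution the paper short-circuits the computation by writing it as $(-1)^kTr_k(\mathcal{A}_H)$ and importing $Tr_k(\mathcal{A}_H)=k^{k-1}(k-1)^{n-k}|E(H)|$ from the proof of Theorem 3.15 of \cite{Cooper12}, while you compute $|W(F)|=k(k-1)\cdot k^{k-2}\bigl((k-2)!\bigr)^k$ directly via the BEST theorem together with Cayley's count of arborescences of $\overrightarrow{K_k}$; the resulting value is consistent with the cited formula (and with the convention, implicit in Lemma~\ref{lem2.2}, that closed walks carry a marked starting position), so your argument is self-contained where the paper's is not. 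The bookkeeping with $b(F)=1$, $c(F)=\bigl((k-1)!\bigr)^k$, the $\bigl((k-1)!\bigr)^k$ choices of the orderings $\alpha_v$, and the sign $(-1)^k$ for $\mathcal{L}_H$ versus $+1$ for $\mathcal{Q}_H$ all check out.
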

\begin{proof}
By Lemma \ref{lem2.2}, we have
\begin{eqnarray}
Tr_t(\mathcal{L}_H)=(k-1)^{n-1}\sum_{F\in\mathcal{F}_t^\prime}\frac{b(F)}{c(F)}\pi_F(\mathcal{L}_H)|W(F)|,
\end{eqnarray}
where $\mathcal{F}_t^\prime=\{F|F\in\mathcal{F}_t,~F~\mbox{is k-valent}\}$. For $F=(i_1\alpha_1,\ldots,i_t\alpha_t)\in\mathcal{F}_t$, if $\pi_F(\mathcal{L}_H)=\prod_{j=1}^t(\mathcal{L}_H)_{i_j\alpha_j}\neq0$, then $i_j\alpha_j=i_ji_j\cdots i_j\in[n]^k$ or $i_j\alpha_j\in E(H)$ for any $1\leqslant j\leqslant t$.

Let $F\in\mathcal{F}_t$ satisfies $\pi_F(\mathcal{L}_H)\neq0$. If $t<k$, then $F$ is k-valent if and only if $F=(i_1i_1\cdots i_1,\ldots,i_ti_t\cdots i_t)$. In this case, $|W(F)|\neq0$ if and only if $i_1=\cdots=i_t$. Let $F_i=(ii\cdots i,\ldots,ii\cdots i)\in\mathcal{F}_t^\prime$ ($t<k$). From Eq. (1) and Definition \ref{definition2.1}, we have
\begin{eqnarray*}
Tr_t(\mathcal{L}_H)&=&(k-1)^{n-1}\sum_{i=1}^n\frac{b(F_i)}{c(F_i)}\pi_{F_i}(\mathcal{L}_H)|W(F_i)|\\
&=&(k-1)^{n-1}\sum_{i=1}^n\frac{(t(k-1))!}{(t(k-1))!}d_i^t=(k-1)^{n-1}\sum_{i=1}^nd_i^t.
\end{eqnarray*}
Similar with the above procedure, we can also get $Tr_t(\mathcal{Q}_H)=(k-1)^{n-1}\sum_{i=1}^nd_i^t$, $t=1,\ldots,k-1$.

Let $F\in\mathcal{F}_k$ satisfies $\pi_F(\mathcal{L}_H)\neq0$. Then $F$ is k-valent and $|W(F)|\neq0$ if and only if $F=(ii\cdots i,\ldots,ii\cdots i)$ or $F=(i_1\alpha_1,\ldots,i_k\alpha_k)$, where $i_1\alpha_1,\ldots,i_k\alpha_k$ correspond to the same edge $i_1i_2\cdots i_k\in E(H)$. Let $F_i=(ii\cdots i,\ldots,ii\cdots i)\in\mathcal{F}_k^\prime$. From Eq. (1) and Definition \ref{definition2.1}, we have
\begin{eqnarray*}
Tr_k(\mathcal{L}_H)&=&(-1)^kTr_k(\mathcal{A}_H)+(k-1)^{n-1}\sum_{i=1}^n\frac{b(F_i)}{c(F_i)}\pi_{F_i}(\mathcal{L}_H)|W(F_i)|\\
&=&(-1)^kTr_k(\mathcal{A}_H)+(k-1)^{n-1}\sum_{i=1}^n\frac{(k(k-1))!}{(k(k-1))!}d_i^k\\
&=&(-1)^kTr_k(\mathcal{A}_H)+(k-1)^{n-1}\sum_{i=1}^nd_i^k.
\end{eqnarray*}
From the proof of [1, Theorem 3.15], we have $Tr_k(\mathcal{A}_H)=k^{k-1}(k-1)^{n-k}|E(H)|$. Hence
\begin{eqnarray*}
Tr_k(\mathcal{L}_H)=(-1)^kk^{k-1}(k-1)^{n-k}|E(H)|+(k-1)^{n-1}\sum_{i=1}^nd_i^k.
\end{eqnarray*}
Similar with the above procedure, we can also get
\begin{eqnarray*}
Tr_k(\mathcal{Q}_H)=k^{k-1}(k-1)^{n-k}|E(H)|+(k-1)^{n-1}\sum_{i=1}^nd_i^k.
\end{eqnarray*}
\end{proof}

\noindent
\textbf{Remark.} Note that traces of a tensor are determined by its spectrum [3, Theorem 6.3]. For a $k$-uniform hypergraph $H$, by Theorem \ref{thm2.3}, we know that $\sum_{i=1}^nd_i^s$ ($s=1,\ldots,k$) is determined by the Laplacian (signless Laplacian) spectrum of $H$, where $d_1,\ldots,d_n$ is the degree sequence of $H$.

\vspace{3mm}
Let $p_t(\mathcal{M})$ denote the codegree $t$ coefficient of the characteristic polynomial of a tensor $\mathcal{M}$.
\begin{lem}\label{lem2.4}
Let $\mathcal{M}$ be an order $k\geqslant2$ dimension $n$ tensor. Then
\begin{eqnarray*}
t!p_t(\mathcal{M})=\det\begin{pmatrix}-Tr_t&Tr_1&Tr_2&\cdots&Tr_{t-1}\\
-Tr_{t-1}&t-1&Tr_1&\cdots&Tr_{t-2}\\
-Tr_{t-2}&0&t-2&\ddots&\vdots\\
\vdots&\vdots&\ddots&\ddots&Tr_1\\
-Tr_1&0&\cdots&0&1\end{pmatrix},
\end{eqnarray*}
where $Tr_t=Tr_t(\mathcal{M})$, $t\in[n(k-1)^{n-1}]$.
\end{lem}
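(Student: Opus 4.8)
The plan is to deduce the formula from the classical Newton identities relating power sums to elementary symmetric functions. Put $N=n(k-1)^{n-1}$ and let $\lambda_1,\ldots,\lambda_N$ be the eigenvalues of $\mathcal{M}$ listed with multiplicity, so that the characteristic polynomial $\phi_{\mathcal{M}}(\lambda)=\prod_{i=1}^{N}(\lambda-\lambda_i)$ is monic of degree $N$. Expanding the product, the codegree $t$ coefficient is $p_t(\mathcal{M})=(-1)^t e_t(\lambda_1,\ldots,\lambda_N)$, where $e_t$ denotes the $t$-th elementary symmetric polynomial; on the other hand, by the formula $Tr_t(\mathcal{M})=\sum_{\lambda\in\sigma(\mathcal{M})}\lambda^t$ recalled above (valid for $t\in[N]$), $Tr_t(\mathcal{M})$ is the $t$-th power sum $\sum_{i=1}^{N}\lambda_i^t$.

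The first step is to substitute $e_j=(-1)^j p_j(\mathcal{M})$ (with $p_0(\mathcal{M})=1$) and the power-sum reading of $Tr_j(\mathcal{M})$ into the usual Newton recursion $s_m+\sum_{j=1}^{m-1}(-1)^j e_j s_{m-j}+(-1)^m m\, e_m=0$, which holds for $1\le m\le N$. After the substitution the alternating signs cancel and one gets
\begin{eqnarray*}
m\,p_m(\mathcal{M})+\sum_{j=1}^{m-1}p_{m-j}(\mathcal{M})\,Tr_j(\mathcal{M})=-Tr_m(\mathcal{M}),\qquad 1\le m\le t.
\end{eqnarray*}
It is essential here that $m\le N$, which is guaranteed by the hypothesis $t\in[n(k-1)^{n-1}]$, since only in that range does Newton's identity contain the term $m\,e_m$.

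The second step is to view the $t$ relations above, taken for $m=t,t-1,\ldots,1$ in that order, as a linear system in the unknowns $p_t(\mathcal{M}),p_{t-1}(\mathcal{M}),\ldots,p_1(\mathcal{M})$ listed in that order. The coefficient matrix is then upper triangular with diagonal $t,t-1,\ldots,1$ (and the entries above the diagonal drawn from $Tr_1,\ldots,Tr_{t-1}$), so its determinant is $t!$, while the right-hand side is the vector $(-Tr_t,-Tr_{t-1},\ldots,-Tr_1)^{\top}$. Solving for the first unknown $p_t(\mathcal{M})$ by Cramer's rule replaces the first column of the coefficient matrix by this vector, and the matrix so obtained is exactly the one displayed in the statement; hence $t!\,p_t(\mathcal{M})$ equals its determinant, as claimed. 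I expect the only points needing care to be the sign bookkeeping in the first step (verifying that $e_j=(-1)^j p_j(\mathcal{M})$ turns Newton's alternating recursion into the all-plus recursion above) and the choice of ordering in the second step (so that Cramer's rule reproduces the displayed matrix rather than a row- or column-permuted variant); both are routine.
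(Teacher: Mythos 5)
Your proposal is correct and takes essentially the same route as the paper: both arrive at the triangular linear system $m\,p_m(\mathcal{M})+\sum_{j=1}^{m-1}Tr_j\,p_{m-j}(\mathcal{M})=-Tr_m(\mathcal{M})$ for $m=t,\ldots,1$ and then extract $p_t(\mathcal{M})$ by Cramer's rule, the coefficient matrix having determinant $t!$. The only difference is that the paper simply cites this system from [4, Theorem~6.10], whereas you re-derive it from Newton's identities together with the power-sum interpretation $Tr_j(\mathcal{M})=\sum_{\lambda\in\sigma(\mathcal{M})}\lambda^j$; your sign bookkeeping ($e_j=(-1)^jp_j(\mathcal{M})$) and the ordering of unknowns both check out.
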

\begin{proof}
From [4, Theorem 6.10], we have
\begin{eqnarray*}
\begin{pmatrix}t&Tr_1&Tr_2&\cdots&Tr_{t-1}\\
0&t-1&Tr_1&\cdots&Tr_{t-2}\\
0&0&t-2&\ddots&\vdots\\
\vdots&\vdots&\ddots&\ddots&Tr_1\\
0&0&\cdots&0&1\end{pmatrix}
\begin{pmatrix}p_t(\mathcal{M})\\p_{t-1}(\mathcal{M})\\ \vdots\\p_2(\mathcal{M})\\p_1(\mathcal{M})\end{pmatrix}=
\begin{pmatrix}-Tr_t\\-Tr_{t-1}\\ \vdots\\-Tr_2\\-Tr_1\end{pmatrix}.
\end{eqnarray*}
We can obtain the expression of $t!p_t(\mathcal{M})$ from Cramer's rule.
\end{proof}
A uniform hypergraph $H$ is called \textit{d-regular} if each vertex of $H$ has degree $d$. The following are some coefficients of the Laplacian (signless Laplacian) polynomial of regular hypergraphs.
\begin{thm}
Let $H$ be a $d$-regular $k$-uniform hypergraph with $n$ vertices. Then
\begin{eqnarray*}
p_t(\mathcal{L}_H)&=&p_t(\mathcal{Q}_H)=(-1)^td^t\binom{n(k-1)^{n-1}}{t},~t=1,\ldots,k-1,\\
p_k(\mathcal{L}_H)&=&(-1)^{k+1}k^{k-3}(k-1)^{n-k}nd+(-1)^kd^k\binom{n(k-1)^{n-1}}{k},\\
p_k(\mathcal{Q}_H)&=&-k^{k-3}(k-1)^{n-k}nd+(-1)^kd^k\binom{n(k-1)^{n-1}}{k}.
\end{eqnarray*}
\end{thm}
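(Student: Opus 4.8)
The plan is to feed the trace formulas of Theorem~\ref{thm2.3} into the Newton-type identity of Lemma~\ref{lem2.4}. Since $H$ is $d$-regular we have $d_i=d$ for every $i$, hence $\sum_{i=1}^n d_i^s=nd^s$, and counting vertex--edge incidences gives $k|E(H)|=nd$, i.e.\ $|E(H)|=nd/k$. Write $N=n(k-1)^{n-1}$ for the degree of the Laplacian (signless Laplacian) polynomial, so that $(k-1)^{n-1}\sum_i d_i^s=Nd^s$ and $k^{k-1}(k-1)^{n-k}|E(H)|=k^{k-2}(k-1)^{n-k}nd$. With these substitutions Theorem~\ref{thm2.3} reads
\begin{eqnarray*}
Tr_t(\mathcal{L}_H)=Tr_t(\mathcal{Q}_H)=Nd^t\qquad(t=1,\dots,k-1),
\end{eqnarray*}
together with $Tr_k(\mathcal{L}_H)=Nd^k+(-1)^kk^{k-2}(k-1)^{n-k}nd$ and $Tr_k(\mathcal{Q}_H)=Nd^k+k^{k-2}(k-1)^{n-k}nd$.

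The key comparison object is the polynomial $(\lambda-d)^N$: its $N$ roots all equal $d$, so its $s$-th power sum is $Nd^s$ for every $s$ and its codegree-$t$ coefficient is $(-1)^t\binom{N}{t}d^t$; equivalently, substituting $Tr_s=Nd^s$ into the determinant of Lemma~\ref{lem2.4} returns $(-1)^t\binom{N}{t}d^t$. By Lemma~\ref{lem2.4}, $t!\,p_t(\mathcal{M})$ is a fixed polynomial (the displayed determinant) in $Tr_1(\mathcal{M}),\dots,Tr_t(\mathcal{M})$ alone; therefore two tensors with the same first $t$ traces have the same $p_t$. Since for $t\le k-1$ the traces $Tr_s(\mathcal{L}_H)=Tr_s(\mathcal{Q}_H)=Nd^s$ coincide with the power sums of $(\lambda-d)^N$ for all $s\le t$, we get immediately $p_t(\mathcal{L}_H)=p_t(\mathcal{Q}_H)=(-1)^t\binom{N}{t}d^t$, which is the first claim.

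For the codegree-$k$ coefficient, note that in the determinant of Lemma~\ref{lem2.4} with $t=k$ the quantity $Tr_k$ appears only in the $(1,1)$ entry $-Tr_k$. Expanding along the first row, the cofactor of that entry is the determinant of an upper-triangular matrix with diagonal $k-1,k-2,\dots,1$, hence equals $(k-1)!$. Thus $k!\,p_k(\mathcal{M})=-(k-1)!\,Tr_k(\mathcal{M})+g\bigl(Tr_1(\mathcal{M}),\dots,Tr_{k-1}(\mathcal{M})\bigr)$ for a universal function $g$, so $p_k(\mathcal{M})$ is affine in $Tr_k(\mathcal{M})$ with slope $-1/k$. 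Applying this to $\mathcal{M}=\mathcal{L}_H$ and $\mathcal{M}=\mathcal{Q}_H$, whose lower traces $Nd,\dots,Nd^{k-1}$ match those of $(\lambda-d)^N$, we obtain
\begin{eqnarray*}
p_k(\mathcal{M})=(-1)^k\binom{N}{k}d^k-\frac{1}{k}\bigl(Tr_k(\mathcal{M})-Nd^k\bigr).
\end{eqnarray*}
Substituting $Tr_k(\mathcal{L}_H)-Nd^k=(-1)^kk^{k-2}(k-1)^{n-k}nd$ and $Tr_k(\mathcal{Q}_H)-Nd^k=k^{k-2}(k-1)^{n-k}nd$, and using $\tfrac{1}{k}k^{k-2}=k^{k-3}$, gives the two stated formulas. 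The only step needing care is verifying that $Tr_k$ enters the Lemma~\ref{lem2.4} determinant through exactly one entry and pinning down the sign of the corresponding cofactor; the remainder is bookkeeping with $|E(H)|=nd/k$ and substitution.
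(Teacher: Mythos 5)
Your proposal is correct, and it rests on the same two ingredients as the paper's proof (the trace formulas of Theorem~\ref{thm2.3} fed into the Newton-identity determinant of Lemma~\ref{lem2.4}), but you evaluate that determinant differently. The paper substitutes $Tr_i=dTr_{i-1}$ and reduces the determinant by explicit column operations to $(-1)^t\prod_{i=0}^{t-1}(Tr_1-id)$, handling the extra $(1,1)$ entry separately when $t=k$. You instead exploit universality: $p_t$ is a fixed function of $Tr_1,\dots,Tr_t$, so for $t\leqslant k-1$ you may read off the answer by comparing with a reference whose power sums are also $Nd^s$, namely $(\lambda-d)^N$; and for $t=k$ you observe that $Tr_k$ occurs only in the $(1,1)$ entry with cofactor $(k-1)!$, so $p_k$ is affine in $Tr_k$ with slope $-1/k$, which isolates the correction term $\mp k^{-1}\cdot k^{k-2}(k-1)^{n-k}nd=\mp k^{k-3}(k-1)^{n-k}nd$ without any further determinant work. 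This is cleaner and less error-prone than the paper's column reduction. The one step you should make explicit is why plugging the power sums $Nd^s$ of $(\lambda-d)^N$ into the Lemma~\ref{lem2.4} determinant returns $t!$ times the codegree-$t$ coefficient $(-1)^t\binom{N}{t}d^t$ of that polynomial: either invoke the classical Newton identities for an arbitrary monic polynomial of degree $N$, or, staying entirely inside the paper's framework, apply Lemma~\ref{lem2.4} to the order-$2$ tensor $dI_N$, whose characteristic polynomial is exactly $(\lambda-d)^N$ and whose traces are exactly $Nd^s$. With that remark added, your argument is complete.
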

\begin{proof}
By Lemma \ref{lem2.4}, we have
\begin{eqnarray}
t!p_t(\mathcal{L}_H)=\det\begin{pmatrix}-Tr_t&Tr_1&Tr_2&\cdots&Tr_{t-1}\\
-Tr_{t-1}&t-1&Tr_1&\cdots&Tr_{t-2}\\
-Tr_{t-2}&0&t-2&\ddots&\vdots\\
\vdots&\vdots&\ddots&\ddots&Tr_1\\
-Tr_1&0&\cdots&0&1\end{pmatrix},
\end{eqnarray}
where $Tr_t=Tr_t(\mathcal{L}_H)$. Since $H$ is $d$-regular, by Theorem \ref{thm2.3}, we have $Tr_i=dTr_{i-1}=n(k-1)^{n-1}d^i$, $i=2,\ldots,k-1$. If $t<k$, then by Eq. (2), we have
\begin{eqnarray*}
&&t!p_t(\mathcal{L}_H)=\det\begin{pmatrix}0&Tr_1&Tr_2&\cdots&Tr_{t-1}\\
0&t-1&Tr_1&\cdots&Tr_{t-2}\\
\vdots&0&t-2&\ddots&\vdots\\
0&\vdots&\ddots&\ddots&Tr_1\\
d-Tr_1&0&\cdots&0&1\end{pmatrix}\\
&=&\det\begin{pmatrix}0&Tr_1-(t-1)d&0&\cdots&0\\
0&t-1&\ddots&\ddots&\vdots\\
\vdots&0&\ddots&Tr_1-2d&0\\
0&\vdots&\ddots&2&Tr_1\\
d-Tr_1&0&\cdots&0&1\end{pmatrix}\\
&=&(-1)^t\prod_{i=0}^{t-1}(Tr_1-id).
\end{eqnarray*}
Since $Tr_1=n(k-1)^{n-1}d$, we have
\begin{eqnarray*}
p_t(\mathcal{L}_H)&=&(-1)^t\frac{\prod_{i=0}^{t-1}(n(k-1)^{n-1}d-id)}{t!}=(-1)^td^t\frac{\prod_{i=0}^{t-1}(n(k-1)^{n-1}-i)}{t!}\\
&=&(-1)^td^t\binom{n(k-1)^{n-1}}{t}.
\end{eqnarray*}
Similar with the above procedure, we can also get
\begin{eqnarray*}
p_t(\mathcal{Q}_H)=(-1)^td^t\binom{n(k-1)^{n-1}}{t},~t=1,\ldots,k-1.
\end{eqnarray*}

Since $H$ is $d$-regular, by Theorem \ref{thm2.3}, we have $Tr_k=(-1)^kk^{k-2}(k-1)^{n-k}nd+dTr_{k-1}$ and $Tr_i=dTr_{i-1}=n(k-1)^{n-1}d^i$, $i=2,\ldots,k-1$. From Eq. (2), we have
\begin{eqnarray*}
&~&k!p_k(\mathcal{L}_H)=\det\begin{pmatrix}(-1)^{k+1}k^{k-2}(k-1)^{n-k}nd&Tr_1&Tr_2&\cdots&Tr_{k-1}\\
0&k-1&Tr_1&\cdots&Tr_{k-2}\\
\vdots&0&k-2&\ddots&\vdots\\
0&\vdots&\ddots&\ddots&Tr_1\\
d-Tr_1&0&\cdots&0&1\end{pmatrix}\\
&=&\det\begin{pmatrix}(-1)^{k+1}k^{k-2}(k-1)^{n-k}nd&Tr_1-(k-1)d&0&\cdots&0\\
0&k-1&\ddots&\ddots&\vdots\\
\vdots&0&\ddots&Tr_1-2d&0\\
0&\vdots&\ddots&2&Tr_1\\
d-Tr_1&0&\cdots&0&1\end{pmatrix}\\
&=&(-1)^{k+1}k^{k-2}(k-1)^{n-k}(k-1)!nd+(-1)^kd^k\prod_{i=0}^{k-1}(n(k-1)^{n-1}-i).
\end{eqnarray*}
\begin{eqnarray*}
p_k(\mathcal{L}_H)=(-1)^{k+1}k^{k-3}(k-1)^{n-k}nd+(-1)^kd^k\binom{n(k-1)^{n-1}}{k}.
\end{eqnarray*}
Similar with the above procedure, we can also get
\begin{eqnarray*}
p_k(\mathcal{Q}_H)&=&-k^{k-3}(k-1)^{n-k}nd+(-1)^kd^k\binom{n(k-1)^{n-1}}{k}.
\end{eqnarray*}
\end{proof}
\section{Eigenvalues and odd-bipartite hypergraphs}
A $k$-uniform hypergraph $H$ is called \textit{odd-bipartite}, if there exists a proper subset $V_1$ of $V(H)$ such that each edge of $H$ contains exactly odd number of vertices in $V_1$ \cite{HuQi-DAM,ShaoShanWu}. Spectral characterizations of odd-bipartite hypergraphs will be investigated in this section. We first give some auxiliary lemmas. The following lemma can be obtained from equation (2.1) in \cite{Shao-product}.
\begin{lem}\label{lem3.1}
Let $\mathcal{A}=(a_{i_1\cdots i_k})$ be an order $k\geqslant2$ dimension $n$ tensor, and let $P=(p_{ij}),Q=(q_{ij})$ be $n\times n$ matrices. Then
\begin{eqnarray*}
(P\mathcal{A}Q)_{i_1\cdots i_k}=\sum_{j_1,\ldots,j_k\in[n]}a_{j_1\cdots j_k}p_{i_1j_1}q_{j_2i_2}\cdots q_{j_ki_k}.
\end{eqnarray*}
\end{lem}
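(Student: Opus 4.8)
This is essentially equation (2.1) of \cite{Shao-product}; I will sketch how it falls out of Definition \ref{definition1.1}. The plan is to unwind the general tensor product twice, once for the right multiplication by $Q$ and once for the left multiplication by $P$. Since $P$ and $Q$ are matrices, the product $P\mathcal{A}Q$ is unambiguous (the tensor product is associative when the outer factors have order $2$, as shown in \cite{Shao-product}), so I may compute it as $P(\mathcal{A}Q)$; alternatively one can simply take $P(\mathcal{A}Q)$ as the definition of $P\mathcal{A}Q$ and check at the end that $(P\mathcal{A})Q$ gives the same formula.

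First I would compute $\mathcal{B}:=\mathcal{A}Q$. Here $\mathcal{A}$ plays the role of the order $m=k\geqslant2$ factor and $Q$ the role of the order $2\geqslant1$ factor in Definition \ref{definition1.1}, so $\mathcal{B}$ has order $(k-1)(2-1)+1=k$ and dimension $n$; because the multi-indices attached to $Q$ range over $[n]^{2-1}=[n]$, a direct reading of the definition (after renaming the summation variables) gives $b_{i_1i_2\cdots i_k}=\sum_{j_2,\ldots,j_k\in[n]}a_{i_1j_2\cdots j_k}q_{j_2i_2}\cdots q_{j_ki_k}$. Next I would compute $P\mathcal{B}$: now $P$ is the order $m=2$ factor and $\mathcal{B}$ the order $k$ factor, so $P\mathcal{B}$ again has order $(2-1)(k-1)+1=k$ with a single trailing multi-index $\alpha_1\in[n]^{k-1}$, and Definition \ref{definition1.1} yields $(P\mathcal{B})_{i_1\,i_2\cdots i_k}=\sum_{j_1\in[n]}p_{i_1j_1}\,b_{j_1i_2\cdots i_k}$. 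Substituting the formula for $b_{j_1i_2\cdots i_k}$ and merging the two nested sums into one over $j_1,\ldots,j_k$ produces exactly $\sum_{j_1,\ldots,j_k\in[n]}a_{j_1\cdots j_k}p_{i_1j_1}q_{j_2i_2}\cdots q_{j_ki_k}$, which is the claimed identity.

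The only thing that requires care is the index bookkeeping: one must track which slot of each factor is contracted at each step and confirm that both products have order exactly $k$, so that $P\mathcal{A}Q$ is again an order $k$ dimension $n$ tensor and the stated formula even makes sense. Once the orders are accounted for, the proof is a mechanical substitution, so I expect no real obstacle beyond notational care.
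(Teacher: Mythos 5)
Your proposal is correct: the paper offers no proof of its own, merely noting that the formula is equation (2.1) of \cite{Shao-product}, and your two-step unwinding of Definition \ref{definition1.1} (first $\mathcal{A}Q$, then $P(\mathcal{A}Q)$, with the order count $(k-1)(2-1)+1=k$ and the merge of the nested sums) is exactly the standard derivation underlying that equation. The index bookkeeping and the associativity remark are both handled correctly, so nothing is missing.
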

\begin{lem}\textup{\cite{HuQi-DAM}}\label{lem3.2}
Let $H$ be a connected k-uniform hypergraph. A nonzero vector $x$ is an eigenvector of $\mathcal{Q}_H$ corresponds to the zero eigenvalue if and only if there exist nonzero $\gamma\in\mathbb{C}$ and integers $\alpha_i$ such that $x_i=\gamma\exp(\frac{2\alpha_i\pi}{k}\sqrt{-1})$ for each $i\in V(H)$, and
\begin{eqnarray*}
\sum_{j\in e}\alpha_j=\sigma_ek+\frac{k}{2}
\end{eqnarray*}
for some integer $\sigma_e$ associated with each $e\in E(H)$.
\end{lem}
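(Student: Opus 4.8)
The plan is to unwind the eigenequation $\mathcal{Q}_H x=\mathbf{0}$ (that is, $\mathcal{Q}_H x=0\cdot x^{[k-1]}$) into the scalar system
\[
d_i x_i^{k-1}=-\sum_{e\in E(H):\,i\in e}\prod_{j\in e\setminus\{i\}}x_j\qquad(i\in V(H)),
\]
which follows from $(\mathcal{Q}_H x)_i=d_i x_i^{k-1}+(\mathcal{A}_H x)_i$ together with the observation that, after summing over the $(k-1)!$ orderings, each edge $e\ni i$ contributes exactly the monomial $\prod_{j\in e\setminus\{i\}}x_j$ to $(\mathcal{A}_H x)_i$. Write $\omega=\exp(\frac{2\pi}{k}\sqrt{-1})$. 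For the sufficiency I would substitute $x_i=\gamma\omega^{\alpha_i}$ and use $\omega^k=1$ to rewrite the $i$-th equation as $(\mathcal{Q}_H x)_i=\gamma^{k-1}\omega^{-\alpha_i}\big(d_i+\sum_{e\ni i}\omega^{\sum_{j\in e}\alpha_j}\big)$; since $\sum_{j\in e}\alpha_j=\sigma_e k+\frac{k}{2}$ gives $\omega^{\sum_{j\in e}\alpha_j}=\omega^{k/2}=-1$, the bracket collapses to $d_i-d_i=0$, so $x$ (nonzero because $\gamma\neq0$) is an eigenvector of $\mathcal{Q}_H$ for the eigenvalue $0$.

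For the necessity I would pass to the vector of moduli $u=(|x_1|,\ldots,|x_n|)^\top$. Applying the triangle inequality to the right-hand side of the scalar system gives $d_i u_i^{k-1}\le(\mathcal{A}_H u)_i$ for every $i$, that is, $\mathcal{A}_H u\ge\mathcal{D}_H u^{[k-1]}$ entrywise with $u\ge\mathbf{0}$, $u\neq\mathbf{0}$. The heart of the proof — and the step I expect to be the main obstacle — is to deduce from this that $u>\mathbf{0}$ and in fact $u$ is constant. For this I would use the Perron--Frobenius theory of nonnegative tensors: assuming $H$ has an edge (the edgeless case being immediate), connectedness makes $\mathcal{A}_H$ weakly irreducible and leaves no isolated vertex, so the $\mathcal{D}_H$-normalized tensor $\widetilde{\mathcal{A}}_H$ with entries $(\mathcal{A}_H)_{i i_2\cdots i_k}/d_i$ is nonnegative, weakly irreducible, and has the all-ones vector $\mathbf{1}$ as a positive eigenvector for the eigenvalue $1$; hence its spectral radius is $1$, $\mathbf{1}$ is its only positive eigenvector up to scaling, and a Collatz--Wielandt estimate shows that a positive $v$ with $\widetilde{\mathcal{A}}_H v\ge v^{[k-1]}$ must be a scalar multiple of $\mathbf{1}$ (with equality holding throughout). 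To obtain positivity of $u$ I would first note that the scalar identity at a vertex $i$ in the support $S=\{\,i:u_i\neq0\,\}$ forces some edge through $i$ to lie inside $S$ (otherwise the right-hand side vanishes while $d_i x_i^{k-1}\neq0$), so $S$ is covered by edges contained in $S$; then, on each connected component $S_m$ of the sub-hypergraph of $H$ formed by those internal edges, the argument above applied to $u|_{S_m}$ forces $u$ to be constant on $S_m$ and the $H$-degree of each vertex of $S_m$ to equal its degree within $S_m$, so no edge of $H$ meets both $S_m$ and its complement. Connectedness of $H$ then forces $S=V(H)$, hence $u>\mathbf{0}$; running the argument once more on $H$ itself yields $u=c\mathbf{1}$ for some $c>0$, so all $|x_i|$ equal $c$.

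It remains to pin down the phases. Writing $x_i=c\zeta_i$ with $|\zeta_i|=1$, dividing the scalar system by $c^{k-1}$ and multiplying by $\zeta_i$ gives $\sum_{e\ni i}\prod_{j\in e}\zeta_j=-d_i\zeta_i^k$, a sum of $d_i$ unit-modulus numbers of total modulus $d_i$; equality in the triangle inequality forces $\prod_{j\in e}\zeta_j=-\zeta_i^k$ for every edge $e\ni i$. Comparing this identity at two vertices of a common edge yields $\zeta_i^k=\zeta_{i'}^k$ whenever $i,i'$ share an edge, so by connectedness $\zeta_i^k$ equals a fixed unit-modulus constant $\tau$ for all $i$. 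Choosing $\gamma$ with $\gamma^k=c^k\tau$ makes $(x_i/\gamma)^k=1$, hence $x_i=\gamma\exp(\frac{2\alpha_i\pi}{k}\sqrt{-1})$ for suitable integers $\alpha_i$; substituting this back into $\prod_{j\in e}\zeta_j=-\tau$ gives $\exp\big(\frac{2\pi}{k}\sqrt{-1}\sum_{j\in e}\alpha_j\big)=-1$, that is, $\sum_{j\in e}\alpha_j\equiv\frac{k}{2}\pmod{k}$, which is exactly $\sum_{j\in e}\alpha_j=\sigma_e k+\frac{k}{2}$ for an integer $\sigma_e$. (In particular this forces $k$ even; for $k$ odd neither side of the equivalence can hold and the statement is vacuous.)
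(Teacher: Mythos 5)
The paper itself gives no proof of Lemma \ref{lem3.2} --- it is imported verbatim from \cite{HuQi-DAM} --- so there is no internal argument to compare against; I can only assess your proof on its merits, and it is correct. Your route (reduce $\mathcal{Q}_Hx=0$ to the scalar system $d_ix_i^{k-1}=-\sum_{e\ni i}\prod_{j\in e\setminus\{i\}}x_j$, force $|x|$ to be a positive constant vector by Perron--Frobenius rigidity, then extract the phase condition from the equality case of the triangle inequality) is essentially the same strategy as the original Hu--Qi argument, and the part you correctly identify as the crux --- upgrading $\mathcal{A}_Hu\geqslant\mathcal{D}_Hu^{[k-1]}$, $u\geqslant0$, to $u=c\mathbf{1}>0$ --- is handled soundly, including the support argument showing no edge can cross out of a component of the support. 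The one step you should attribute rather than assert is the subinvariance claim that a positive $v$ with $\widetilde{\mathcal{A}}_Hv\geqslant v^{[k-1]}=\rho(\widetilde{\mathcal{A}}_H)v^{[k-1]}$ forces equality and $v$ proportional to the Perron vector: this is the equality case of the Collatz--Wielandt characterization for weakly irreducible nonnegative tensors and is available in \cite{Yang} and \cite{Friedland}, both already in the bibliography. Your closing observation that the derived congruence $\sum_{j\in e}\alpha_j\equiv\frac{k}{2}\pmod{k}$ forces $k$ even (so that for odd $k$ the equivalence is vacuous) is a correct and worthwhile remark.
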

Weakly irreducible tensors are defined in \cite{Friedland}. It is known that a $k$-uniform hypergraph $H$ is connected if and only if $\mathcal{A}_H$ is weakly irreducible \cite{Pearson}.
\begin{lem}\label{lem3.3}\textup{\cite{ShaoShanWu,Yang}}
Let $\mathcal{A}$ be an order $k$ dimension $n$ nonnegative weakly irreducible tensor. If $\rho(\mathcal{A})\exp(\theta\sqrt{-1})$ is an eigenvalue of $\mathcal{A}$, then there exists a diagonal matrix $U$ with unit diagonal entries such that
\begin{eqnarray*}
\mathcal{A}=\exp(-\theta\sqrt{-1})U^{-(k-1)}\mathcal{A}U.
\end{eqnarray*}
\end{lem}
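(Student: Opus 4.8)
\medskip
\noindent\emph{Proof idea.}
The plan is to imitate the classical Perron--Frobenius argument for nonnegative irreducible matrices, using in place of its matrix analogue the Perron--Frobenius theorem for nonnegative weakly irreducible tensors \cite{Friedland}. We may assume $\rho(\mathcal{A})>0$, since the case $\rho(\mathcal{A})=0$ is trivial. First I would fix a nonzero $x=(x_1,\ldots,x_n)^\top$ with $\mathcal{A}x=\rho(\mathcal{A})\exp(\theta\sqrt{-1})\,x^{[k-1]}$ and pass to the modulus vector $|x|=(|x_1|,\ldots,|x_n|)^\top$. Using nonnegativity of $\mathcal{A}$ and the triangle inequality componentwise,
\[
\rho(\mathcal{A})|x_i|^{k-1}=\Big|\sum_{i_2,\ldots,i_k}a_{ii_2\cdots i_k}x_{i_2}\cdots x_{i_k}\Big|\le\sum_{i_2,\ldots,i_k}a_{ii_2\cdots i_k}|x_{i_2}|\cdots|x_{i_k}|=(\mathcal{A}|x|)_i,
\]
so $\mathcal{A}|x|\ge\rho(\mathcal{A})\,|x|^{[k-1]}$.

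Next I would invoke the tensor Perron--Frobenius theorem: for a nonnegative weakly irreducible tensor, $\rho(\mathcal{A})$ is an eigenvalue with an essentially unique positive eigenvector, and the accompanying Collatz--Wielandt characterization forces any $0\ne z\ge0$ satisfying $\mathcal{A}z\ge\rho(\mathcal{A})z^{[k-1]}$ to be a positive scalar multiple of that eigenvector. Applied to $z=|x|$, this gives $|x|>0$ and $\mathcal{A}|x|=\rho(\mathcal{A})\,|x|^{[k-1]}$; in particular the displayed inequality is an equality for every $i$. Writing $x_j=|x_j|\exp(\phi_j\sqrt{-1})$ and using the equality case of the triangle inequality (every nonzero summand must share the argument of the sum, which is $\theta+(k-1)\phi_i$), I obtain for each index string $ii_2\cdots i_k$ with $a_{ii_2\cdots i_k}\ne0$ the phase relation
\[
\phi_{i_2}+\cdots+\phi_{i_k}\equiv\theta+(k-1)\phi_i\pmod{2\pi}.
\]

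Finally I would set $U=\mathrm{diag}\big(\exp(\phi_1\sqrt{-1}),\ldots,\exp(\phi_n\sqrt{-1})\big)$, a diagonal matrix with unit diagonal entries, and verify the claimed identity entrywise via Lemma~\ref{lem3.1}. Since $U$ is diagonal, that lemma shows the $(i_1,\ldots,i_k)$ entry of $\exp(-\theta\sqrt{-1})U^{-(k-1)}\mathcal{A}U$ is $a_{i_1\cdots i_k}$ times the unimodular factor $\exp\big((\phi_{i_2}+\cdots+\phi_{i_k}-(k-1)\phi_{i_1}-\theta)\sqrt{-1}\big)$; this factor equals $1$ whenever $a_{i_1\cdots i_k}\ne0$ by the phase relation (and is irrelevant when $a_{i_1\cdots i_k}=0$), so $\mathcal{A}=\exp(-\theta\sqrt{-1})U^{-(k-1)}\mathcal{A}U$. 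I expect the main obstacle to be the middle step: promoting $\mathcal{A}|x|\ge\rho(\mathcal{A})|x|^{[k-1]}$ to the eigenvalue equation with a strictly positive $|x|$. This is exactly where weak irreducibility is needed, either through the Perron--Frobenius machinery of \cite{Friedland} or through a direct argument showing that the support of $|x|$ is closed in the digraph associated with $\mathcal{A}$ and hence is all of $[n]$.
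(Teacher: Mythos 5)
Your argument is correct and is the standard Perron--Frobenius phase argument; note that the paper itself offers no proof of this lemma but imports it from the references \cite{ShaoShanWu,Yang}, and your proof is essentially the one given there. The only step that genuinely uses weak irreducibility is the one you flag yourself: upgrading $\mathcal{A}|x|\geqslant\rho(\mathcal{A})|x|^{[k-1]}$ to the equality $\mathcal{A}|x|=\rho(\mathcal{A})|x|^{[k-1]}$ with $|x|>0$, which indeed follows from the Collatz--Wielandt characterization and the positivity/uniqueness part of the tensor Perron--Frobenius theorem of \cite{Friedland}, after which the equality case of the triangle inequality and Lemma~\ref{lem3.1} give the conjugation identity exactly as you describe.
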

For a tensor $\mathcal{T}$, let $H\sigma(\mathcal{T})=\{\lambda|\lambda\in\sigma(\mathcal{T}),~\lambda~\mbox{has a real eigenvector}\}$. For a connected $k$-uniform hypergraph $G$, Shao et al \cite{ShaoShanWu} proved that
\begin{eqnarray*}
H\sigma(\mathcal{L}_G)=H\sigma(\mathcal{Q}_G)\Longrightarrow\sigma(\mathcal{L}_G)=\sigma(\mathcal{Q}_G).
\end{eqnarray*}
Shao et al wish to know whether the reverse implication is true. We show that the reverse is true when $k$ is not divisible by $4$.
\begin{thm}\label{thm3.4}
Let $G$ be a connected $k$-uniform hypergraph, and $k$ is not divisible by $4$. Then the following are equivalent:\\
(1) $k$ is even and $H$ is odd-bipartite.\\
(2) $H\sigma(\mathcal{L}_G)=H\sigma(\mathcal{Q}_G)$.\\
(3) $\sigma(\mathcal{L}_G)=\sigma(\mathcal{Q}_G)$.\\
(4) $0$ is a signless Laplacian eigenvalue of $G$.
\end{thm}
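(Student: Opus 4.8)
The plan is to prove the cyclic chain $(1)\Rightarrow(2)\Rightarrow(3)\Rightarrow(4)\Rightarrow(1)$. Two of the links come essentially for free: $(2)\Rightarrow(3)$ is exactly the implication of Shao et al.\ recalled just above, and $(3)\Rightarrow(4)$ holds because the all-ones vector $\mathbf{1}$ satisfies $\mathcal{L}_G\mathbf{1}=\mathbf{0}=0\cdot\mathbf{1}^{[k-1]}$, so $0\in\sigma(\mathcal{L}_G)$ for every $G$; hence $\sigma(\mathcal{L}_G)=\sigma(\mathcal{Q}_G)$ forces $0$ into the signless Laplacian spectrum. I would assume throughout that $G$ has at least one edge, the edgeless case being degenerate.

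For $(1)\Rightarrow(2)$ I would use a diagonal $\pm1$ conjugation. Let $k$ be even, let $V_1\subsetneq V(G)$ be an odd-bipartition, and set $D=\mathrm{diag}(d_1,\dots,d_n)$ with $d_i=-1$ for $i\in V_1$ and $d_i=1$ otherwise; then $D$ is real, $D^2=I$, and since $k-1$ is odd we have $D^{-(k-1)}=D$. By Lemma~\ref{lem3.1}, the $(i_1\cdots i_k)$-entry of $D\mathcal{A}_GD$ is $d_{i_1}\cdots d_{i_k}(\mathcal{A}_G)_{i_1\cdots i_k}$, and for an edge $e=i_1\cdots i_k$ this sign equals $(-1)^{|e\cap V_1|}=-1$; thus $D^{-(k-1)}\mathcal{A}_GD=-\mathcal{A}_G$. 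Conjugating the diagonal tensor $\mathcal{D}_G$ by $D$ leaves it unchanged, so $D^{-(k-1)}\mathcal{Q}_GD=\mathcal{L}_G$ and, symmetrically, $D^{-(k-1)}\mathcal{L}_GD=\mathcal{Q}_G$. A routine computation then shows that $\mathcal{Q}_Gx=\lambda x^{[k-1]}$ implies $\mathcal{L}_G(D^{-1}x)=\lambda(D^{-1}x)^{[k-1]}$, and $D^{-1}x$ is real whenever $x$ is; hence $H\sigma(\mathcal{Q}_G)\subseteq H\sigma(\mathcal{L}_G)$, and the reverse inclusion follows in the same way, giving $(2)$. (The same identities also yield $\sigma(\mathcal{L}_G)=\sigma(\mathcal{Q}_G)$ directly, but this is not needed for the chain.)

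The heart of the argument is $(4)\Rightarrow(1)$, and this is the only place the hypothesis $4\nmid k$ is used. Since $G$ is connected, Lemma~\ref{lem3.2} applies to the eigenvalue $0$ of $\mathcal{Q}_G$, producing integers $\alpha_i$ ($i\in V(G)$) and $\sigma_e$ ($e\in E(G)$) with $\sum_{j\in e}\alpha_j=\sigma_e k+\tfrac{k}{2}$ for every edge $e$. Because the left-hand side and $\sigma_e k$ are integers, $\tfrac{k}{2}\in\mathbb{Z}$, so $k$ is even; write $k=2m$, and note that $m$ is odd since $4\nmid k$. Then $\sum_{j\in e}\alpha_j=m(2\sigma_e+1)$ is odd for every edge $e$. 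Now put $V_1=\{\,i\in V(G):\alpha_i\text{ odd}\,\}$. For every edge $e$ we get $|e\cap V_1|\equiv\sum_{j\in e}\alpha_j\equiv1\pmod{2}$, so each edge meets $V_1$ in an odd number of vertices; moreover, fixing any edge $e_0$, we have $1\le|e_0\cap V_1|\le k-1$ (it is odd, hence not $0$; and $k$ is even, hence not $k$), so $V_1$ is a proper nonempty subset of $V(G)$. Therefore $G$ is odd-bipartite with $k$ even, which is $(1)$.

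The main obstacle is precisely this last implication: one must extract the right parity information from Lemma~\ref{lem3.2}, and the decisive congruence $\sum_{j\in e}\alpha_j\equiv1\pmod{2}$ — which simultaneously makes $V_1$ a legitimate odd-bipartition and forces it to be proper — depends on $m=k/2$ being odd. If $k\equiv0\pmod{4}$ this step breaks down (one only obtains $\sum_{j\in e}\alpha_j\equiv2\pmod{4}$, and the parity set $V_1$ need no longer meet every edge in an odd number of vertices), which is exactly why the theorem is restricted to $4\nmid k$. The other three links are, by contrast, short and formal.
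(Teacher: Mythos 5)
Your proof is correct and follows essentially the same route as the paper: the decisive implication $(4)\Rightarrow(1)$ is obtained exactly as in the paper, by applying Lemma~\ref{lem3.2} to the zero eigenvalue of $\mathcal{Q}_G$, noting that $k$ must be even, that $\sum_{j\in e}\alpha_j$ is odd because $k/2$ is odd when $4\nmid k$, and taking $V_1$ to be the set of vertices with odd label. The only difference is cosmetic: for $(1)\Rightarrow(2)$ the paper simply cites \cite{ShaoShanWu}, whereas you supply a self-contained diagonal $\pm1$ similarity argument via Lemma~\ref{lem3.1} (which is correct, and you additionally verify that $V_1$ is a proper nonempty subset, a point the paper leaves implicit).
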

\begin{proof}
From [17, Theorem 2.2], we have $(1)\Rightarrow(2)\Rightarrow(3)$. Since $0$ is always an eigenvalue of $\mathcal{L}_G$ (see \cite{Qi-Laplaican}), we have $(3)\Rightarrow(4)$. Next we prove that $(4)\Rightarrow(1)$.

If $0$ is an eigenvalue of $\mathcal{Q}_G$, then by Lemma \ref{lem3.2}, there exists a vertex labeling $f:V(G)\rightarrow[k]$ such that
\begin{eqnarray*}
\sum_{i\in e}f(i)\equiv\frac{k}{2}~(\mbox{mod}~k)
\end{eqnarray*}
for each $e\in E(G)$. Hence $k$ is even. Since $k$ is not divisible by $4$, we know that $\frac{k}{2}$ is odd. So $\sum_{i\in e}f(i)$ is odd for each $e\in E(G)$. Let $V_1=\{u|u\in V(G),~f(u)~\mbox{is odd}\}$. For any $e\in E(G)$, since $\sum_{i\in e}f(i)$ is odd, $e$ contains exactly odd number of vertices in $V_1$. Hence $G$ is odd-bipartite.
\end{proof}
When $k=2$, Theorem \ref{thm3.4} becomes a classic result in spectral graph theory, i.e., a connected graph $G$ is bipartite if and only if $0$ is a signless Laplacian eigenvalue of $G$. It is also well known that a connected graph $G$ is bipartite if and only if $-\rho(\mathcal{A}_G)$ is an eigenvalue of $G$. We generalize this result as follows.
\begin{thm}\label{thm3.5}
Let $H$ be a connected $k$-uniform hypergraph, and $k$ is not divisible by $4$. Then the following are equivalent:\\
(1) $k$ is even and $H$ is odd-bipartite.\\
(2) $-\rho(\mathcal{A}_H)$ is an eigenvalue of $H$.
\end{thm}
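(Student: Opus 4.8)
The plan is to prove the two implications separately, using the eigenvalue-transformation machinery of Lemma \ref{lem3.1} and Lemma \ref{lem3.3}, together with the connectivity criterion that $\mathcal{A}_H$ is weakly irreducible. For the direction $(1)\Rightarrow(2)$, I would start from the odd-bipartite partition $V(H)=V_1\cup V_2$ (with $V_1$ a proper subset meeting every edge in an odd number of vertices) and define the diagonal $\pm1$ matrix $U=\mathrm{diag}(u_1,\ldots,u_n)$ by $u_i=-1$ for $i\in V_1$ and $u_i=1$ for $i\in V_2$. Since $k$ is even, $U^{-(k-1)}=U^{k-1}=U$, so by Lemma \ref{lem3.1} the entries of $U\mathcal{A}_HU$ are $(U\mathcal{A}_HU)_{i_1\cdots i_k}=u_{i_1}u_{i_2}\cdots u_{i_k}(\mathcal{A}_H)_{i_1\cdots i_k}$; for an edge $\{i_1,\ldots,i_k\}$ the sign is $(-1)^{|\{i_1,\ldots,i_k\}\cap V_1|}=-1$ because that intersection is odd. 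Hence $U\mathcal{A}_HU=-\mathcal{A}_H$, i.e. $\mathcal{A}_H=-U^{-(k-1)}\mathcal{A}_HU$, which says (in the notation of Lemma \ref{lem3.3}) that $\mathcal{A}_H$ and $-\mathcal{A}_H$ are diagonally similar in the relevant sense; consequently they have the same spectrum, so $-\rho(\mathcal{A}_H)\in\sigma(\mathcal{A}_H)$ follows from $\rho(\mathcal{A}_H)\in\sigma(\mathcal{A}_H)$ (which holds by Perron--Frobenius for nonnegative weakly irreducible tensors). Concretely: if $x$ is a Perron eigenvector for $\rho(\mathcal{A}_H)$, then $Ux$ is an eigenvector of $\mathcal{A}_H$ for $-\rho(\mathcal{A}_H)$, since $\mathcal{A}_H(Ux)=(U\mathcal{A}_HU)U^{-(k-1)}(Ux)$ unpacks to $-\rho(\mathcal{A}_H)(Ux)^{[k-1]}$ using $U^{k}=I$ (even $k$).

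For the direction $(2)\Rightarrow(1)$, I would assume $-\rho(\mathcal{A}_H)$ is an eigenvalue of $\mathcal{A}_H$. Writing $-\rho(\mathcal{A}_H)=\rho(\mathcal{A}_H)\exp(\pi\sqrt{-1})$, Lemma \ref{lem3.3} (applicable since $\mathcal{A}_H$ is nonnegative and weakly irreducible by connectivity) yields a diagonal matrix $U$ with unit-modulus diagonal entries $u_i=\exp(\theta_i\sqrt{-1})$ such that $\mathcal{A}_H=\exp(-\pi\sqrt{-1})U^{-(k-1)}\mathcal{A}_HU$. Comparing entries via Lemma \ref{lem3.1}, for each edge $e=\{i_1,\ldots,i_k\}\in E(H)$ we get $u_{i_1}u_{i_2}\cdots u_{i_k}=u_{i_1}^{k-1}\exp(\pi\sqrt{-1})$, hence, taking a base vertex and normalizing, a vertex labeling argument gives $\sum_{i\in e}\theta_i\equiv \pi \pmod{2\pi/?}$ — more precisely I would argue that after rescaling the $\theta_i$ to values $\alpha_i$ the relation forces $\sum_{i\in e}\alpha_i\equiv \tfrac{k}{2}\pmod k$ with $k$ even. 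This is exactly the same modular condition that appears in Lemma \ref{lem3.2} and in the proof of Theorem \ref{thm3.4}, so at this point I would invoke the identical endgame: since $k$ is even and not divisible by $4$, $\tfrac{k}{2}$ is odd, so $\sum_{i\in e}\alpha_i$ is odd for each edge; setting $V_1=\{u: \alpha_u \text{ is odd}\}$ makes $|e\cap V_1|$ odd for every $e$, and $V_1$ is a proper subset (otherwise some edge would meet $V_1$ in $k$, an even number, a contradiction), so $H$ is odd-bipartite.

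The main obstacle is the bookkeeping in $(2)\Rightarrow(1)$: extracting, from the diagonal-similarity relation of Lemma \ref{lem3.3}, the clean statement that the phases $\theta_i$ can be chosen to be integer multiples of $2\pi/k$ satisfying $\sum_{i\in e}\alpha_i\equiv k/2\pmod k$. The subtlety is that Lemma \ref{lem3.3} only gives unit-modulus entries a priori, not $k$-th roots of unity; one must use the edge relations $u_{i_1}\cdots u_{i_k}=\exp(\pi\sqrt{-1})\,u_{i_1}^{k-1}$ across a spanning connected structure to propagate a single normalization, and then a standard argument (the same one underlying Lemma \ref{lem3.2}, essentially the content of \cite{ShaoShanWu,Yang}) shows the phases are forced onto the lattice $\tfrac{2\pi}{k}\mathbb{Z}$. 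I expect this step can be handled either by quoting the relevant normalization result from \cite{ShaoShanWu} directly, or by mimicking the proof of Lemma \ref{lem3.2}; everything after that is the verbatim parity argument from Theorem \ref{thm3.4}. The condition "$k$ not divisible by $4$" enters only at the very end, to guarantee $k/2$ is odd.
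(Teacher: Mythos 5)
Your proposal is correct and follows essentially the same route as the paper: the forward direction is \cite[Theorem 2.3]{ShaoShanWu} (which your explicit $\pm1$ diagonal similarity reproves), and the reverse direction is exactly the paper's argument via Lemma \ref{lem3.3}, Lemma \ref{lem3.1}, and the parity of $k/2$. The ``main obstacle'' you flag dissolves in one line: applying the edge relation at each vertex of an edge gives $u_{i_1}u_{i_2}\cdots u_{i_k}=-u_{i_1}^k=\cdots=-u_{i_k}^k$, so by connectivity all $u_i^k$ equal a common $\exp(\theta\sqrt{-1})$ and hence $u_i=\exp\bigl(\tfrac{2\pi\alpha_i+\theta}{k}\sqrt{-1}\bigr)$ for integers $\alpha_i$, after which the common offset $\theta$ cancels in each edge product and yields $\sum_{j}\alpha_{i_j}\equiv k/2\pmod k$ directly (note also your intermediate relation should read $u_{i_1}\cdots u_{i_k}=-u_{i_1}^{k}$, not $-u_{i_1}^{k-1}$).
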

\begin{proof}
From [17, Theorem 2.3], we have $(1)\Rightarrow(2)$. If (2) holds, then by Lemma \ref{lem3.3}, there exists a diagonal matrix $U$ with unit diagonal entries such that $\mathcal{A}_H=-U^{-(k-1)}\mathcal{A}_HU$. By Lemma \ref{lem3.1}, we have
\begin{eqnarray*}
a_{i_1i_2\cdots i_k}=-a_{i_1i_2\cdots i_k}u_{i_1}^{-(k-1)}u_{i_2}\cdots u_{i_k},
\end{eqnarray*}
where $u_{i_j}$ is the diagonal entry of $U$ corresponds to vertex $i_j$ ($j=1,\ldots,k$). For any edge $e=i_1i_2\cdots i_k\in E(H)$, we get
\begin{eqnarray*}
u_{i_1}^{-(k-1)}u_{i_2}\cdots u_{i_k}=-1,~u_{i_1}u_{i_2}\cdots u_{i_k}=-u_{i_1}^k.
\end{eqnarray*}
Similarly, we have $u_{i_1}u_{i_2}\cdots u_{i_k}=-u_{i_1}^k=\cdots=-u_{i_k}^k$. Since $u_{i_1},\ldots,u_{i_k}$ are unit complex number, there exist integers $\alpha_{i_1},\ldots,\alpha_{i_k}$ and $\theta$ such that $u_{i_j}=\exp(\frac{2\pi\alpha_{i_j}+\theta}{k}\sqrt{-1})$, $j=1,\ldots,k$. Then
\begin{eqnarray*}
u_{i_1}u_{i_2}\cdots u_{i_k}=\exp(\frac{k\theta+2\pi\sum_{j=1}^k\alpha_{i_j}}{k}\sqrt{-1})&=&-u_{i_1}^k=-\exp(\theta\sqrt{-1}),\\
\exp(\frac{2\pi\sum_{j=1}^k\alpha_{i_j}}{k}\sqrt{-1})&=&-1.
\end{eqnarray*}
Hence $\sum_{j=1}^k\alpha_{i_j}\equiv\frac{k}{2}~(\mbox{mod}~k)$, $k$ is even. Since $k$ is not divisible by $4$, $\sum_{j=1}^k\alpha_{i_j}$ is odd for any edge $e=i_1i_2\cdots i_k\in E(H)$. Let $V_1=\{u|u\in V(H),\alpha_u~\mbox{is odd}\}$. For any $e=i_1i_2\cdots i_k\in E(H)$, since $\sum_{j=1}^k\alpha_{i_j}$ is odd, $e$ contains exactly odd number of vertices in $V_1$. Hence $H$ is odd-bipartite.
\end{proof}
Let $H$ be a connected $k$-uniform hypergraph. If $0$ is an eigenvalue of $\mathcal{Q}_H$, then by the proof of Theorem \ref{thm3.4}, we know that there exists a vertex labeling $f:V(H)\rightarrow[k]$ such that $\sum_{i\in e}f(i)\equiv\frac{k}{2}~(\mbox{mod}~k)$ for each $e\in E(H)$. We pose the following conjecture.
\begin{con}
Let $H$ be a connected $k$-uniform hypergraph. Then the following are equivalent:\\
(1) $k$ is even and $H$ is odd-bipartite.\\
(2) $0$ is a signless Laplacian eigenvalue of $H$.\\
(3) $-\rho(\mathcal{A}_H)$ is an eigenvalue of $H$.\\
(4) There exists a vertex labeling $f:V(H)\rightarrow[k]$ such that $\sum_{i\in e}f(i)\equiv\frac{k}{2}~(\mbox{mod}~k)$ for each $e\in E(H)$.
\end{con}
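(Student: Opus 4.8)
The plan is to observe that the four conditions fall into an easy cluster --- $(2)$, $(3)$, $(4)$ are equivalent for every connected $H$, and each is implied by $(1)$ --- so that the whole conjecture reduces to the single implication $(4)\Rightarrow(1)$, which is where all the difficulty sits.

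First I would settle the easy directions. $(2)\Leftrightarrow(4)$ is immediate from Lemma~\ref{lem3.2}: the integers $\alpha_i$ it supplies when $0\in\sigma(\mathcal{Q}_H)$ satisfy $\sum_{i\in e}\alpha_i\equiv\tfrac{k}{2}\pmod k$, so $f(i)=\alpha_i\bmod k$ witnesses $(4)$; conversely, given $f$ as in $(4)$, the vector $x_i=\exp(\tfrac{2f(i)\pi}{k}\sqrt{-1})$ meets the hypotheses of Lemma~\ref{lem3.2} with $\gamma=1$. For $(4)\Rightarrow(3)$ I would twist a Perron eigenvector: since $H$ is connected, $\mathcal{A}_H$ is nonnegative weakly irreducible, hence has a nonnegative eigenvector $y$ for $\rho(\mathcal{A}_H)$; setting $z_i=\exp(\tfrac{2f(i)\pi}{k}\sqrt{-1})y_i$ and using $\sum_{i\in e}f(i)\equiv\tfrac{k}{2}\pmod k$ together with $\exp(2\pi f(i)\sqrt{-1})=1$ gives $\mathcal{A}_H z=-\rho(\mathcal{A}_H)z^{[k-1]}$. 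For $(3)\Rightarrow(4)$ I would run the argument already appearing in the proof of Theorem~\ref{thm3.5}: Lemma~\ref{lem3.3} and Lemma~\ref{lem3.1} produce a unit-modulus diagonal matrix $U=\mathrm{diag}(u_1,\dots,u_n)$ with $u_{i_1}\cdots u_{i_k}=-u_{i_j}^k$ on every edge and every $j$; writing $u_v=\exp(2\pi t_v\sqrt{-1})$, connectivity forces $t_v-t_{v_0}\in\tfrac1k\mathbb{Z}$, and matching the arguments of both sides yields integers $\beta_v$ with $\sum_{i\in e}\beta_i\equiv\tfrac{k}{2}\pmod k$, i.e.\ $(4)$. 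Finally $(1)\Rightarrow(4)$ is trivial: if $k$ is even and $V_1\subsetneq V(H)$ meets every edge oddly, then $f=\tfrac{k}{2}\mathbf{1}_{V_1}$ has $\sum_{i\in e}f(i)=\tfrac{k}{2}\,|e\cap V_1|\equiv\tfrac{k}{2}\pmod k$.

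It then remains to prove $(4)\Rightarrow(1)$. From $(4)$ one reads off that $\tfrac{k}{2}$ is an integer, so $k$ is even; the real content is to produce the set $V_1$. When $k$ is not divisible by $4$, $\tfrac{k}{2}$ is odd, so reducing $\sum_{i\in e}f(i)\equiv\tfrac{k}{2}\pmod k$ modulo $2$ gives $\sum_{i\in e}\bar f(i)\equiv 1\pmod 2$, and $V_1=\{v:f(v)\text{ odd}\}$ works --- this is exactly the argument used in the proofs of Theorems~\ref{thm3.4} and \ref{thm3.5}. So the only remaining content of the conjecture is the case $4\mid k$, and this is the step I expect to be the genuine obstacle.

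When $4\mid k$ the mod-$2$ reduction only gives $\sum_{i\in e}\bar f(i)\equiv 0\pmod 2$, which says nothing about odd-bipartiteness. Recall that $H$ is odd-bipartite exactly when the $\mathbb{F}_2$-system $\sum_{i\in e}g(i)=1$ ($e\in E(H)$) is solvable, equivalently when there is no edge subset $S$ in which every vertex lies in an even number of members and $|S|$ is odd. A natural attempt is an induction on $k$ through the quotient $\mathbb{Z}_k\to\mathbb{Z}_{k/2}$, but I expect it to fail, because $(4)\Rightarrow(1)$ is in fact false when $4\mid k$: for $k=4$, the $2$-regular connected hypergraph on $\{1,\dots,6\}$ with edges $\{1,2,3,4\}$, $\{3,4,5,6\}$, $\{1,2,5,6\}$ admits the labeling $f=(0,1,0,1,0,1)$ (each edge sum equals $2=\tfrac{k}{2}$), hence satisfies $(2)$, $(3)$, $(4)$, yet is not odd-bipartite: for any candidate $V_1$ the three numbers $|e\cap V_1|$ would all be odd while their sum equals $\sum_{v\in V_1}\deg(v)=2|V_1|$, which is even. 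Consequently the statement I would actually aim to prove is the corrected one: $(1)\Rightarrow(2)\Leftrightarrow(3)\Leftrightarrow(4)$ always, and $(2)\Rightarrow(1)$ under the extra hypothesis $4\nmid k$; in the conjecture either $4\mid k$ must be excluded, or ``$H$ is odd-bipartite'' in $(1)$ must be replaced by the (strictly weaker, when $4\mid k$) coloring condition $(4)$.
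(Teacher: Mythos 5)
This statement is posed as a conjecture, so the paper contains no proof to compare against; what you have written is in fact a (correct) refutation of it, together with a proof of the implications that do hold. Your bookkeeping of the easy directions is right: $(1)\Rightarrow(4)$ via $f=\frac{k}{2}\mathbf{1}_{V_1}$, $(4)\Leftrightarrow(2)$ is exactly the content of Lemma~\ref{lem3.2}, $(4)\Rightarrow(3)$ by twisting a Perron eigenvector $y$ to $z_v=\exp(\frac{2\pi f(v)}{k}\sqrt{-1})\,y_v$ (the computation closes because $\exp(2\pi f(v)\sqrt{-1})=1$), $(3)\Rightarrow(4)$ is the argument already carried out in the proof of Theorem~\ref{thm3.5} via Lemmas~\ref{lem3.3} and~\ref{lem3.1}, and $(4)\Rightarrow(1)$ for $4\nmid k$ is the mod-$2$ reduction used in Theorems~\ref{thm3.4} and~\ref{thm3.5}. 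So everything reduces, as you say, to $(4)\Rightarrow(1)$ when $4\mid k$.

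Your counterexample for $k=4$ is valid. With $V=[6]$ and edges $\{1,2,3,4\}$, $\{3,4,5,6\}$, $\{1,2,5,6\}$, the hypergraph is connected and $2$-regular; the labeling $f=(4,1,4,1,4,1)$ (values in $[4]$, congruent to your $(0,1,0,1,0,1)$ mod $4$) gives every edge sum $\equiv 2=\frac{k}{2}\pmod 4$, so $(4)$ holds, and by Lemma~\ref{lem3.2} (or by direct computation, since $\prod_{j\in e\setminus\{i\}}x_j=-x_i^{k-1}$ when $x_i^k=1$) the vector $x=(1,\sqrt{-1},1,\sqrt{-1},1,\sqrt{-1})^\top$ is a null eigenvector of $\mathcal{Q}_H$, so $(2)$ holds as well. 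Yet no odd transversal $V_1$ exists: $\sum_{e\in E}|e\cap V_1|=\sum_{v\in V_1}d_v=2|V_1|$ is even, while a sum of three odd numbers is odd. Hence $(2)$, $(3)$, $(4)$ hold and $(1)$ fails, and the conjecture is false whenever one can build such examples, i.e.\ for $4\mid k$. Your proposed correction --- $(2)\Leftrightarrow(3)\Leftrightarrow(4)$ unconditionally, $(1)$ equivalent to these only when $4\nmid k$ --- is exactly the statement that survives; the labeling condition $(4)$ is a strictly weaker property than odd-bipartiteness when $4\mid k$. The only cosmetic point worth fixing is that the conjecture requires $f$ to take values in $[k]$ rather than in $\{0,\ldots,k-1\}$, which is handled by replacing $0$ with $k$ as above.
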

\section{Eigenvalues of power hypergraphs}
A vertex with degree one is called a \textit{core vertex} \cite{HuQiShao}. For a $k$-uniform hypergraph $H$, if $e\in E(H)$ contains core vertices, then we use $H-e$ to denote a $k$-uniform sub-hypergraph of $H$ obtained by deleting the edge $e$ and all core vertices in $e$.
\begin{thm}\label{thm4.1}
Let $H$ be a $k$-uniform hypergraph, and let $e\in E(H)$ be an edge contains at least two core vertices. If $\lambda$ is an eigenvalue of $H-e$, then $\lambda$ is an eigenvalue of $H$.
\end{thm}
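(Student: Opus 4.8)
The plan is to promote an eigenvector of the adjacency tensor $\mathcal{A}_{H-e}$ to an eigenvector of $\mathcal{A}_H$ for the same eigenvalue, by assigning the value $0$ to every core vertex of $e$. Write $C$ for the set of core vertices contained in $e$, so $|C|\geqslant 2$ by hypothesis. Since every vertex of $C$ has degree $1$ in $H$, it lies in the single edge $e$, so $V(H-e)=V(H)\setminus C$, $E(H-e)=E(H)\setminus\{e\}$, and every edge of $H-e$ is contained in $V(H-e)$; in particular $H-e$ is indeed a $k$-uniform hypergraph on $V(H)\setminus C$. Given $\lambda\in\sigma(\mathcal{A}_{H-e})$, pick a nonzero $x\in\mathbb{C}^{V(H-e)}$ with $\mathcal{A}_{H-e}x=\lambda x^{[k-1]}$, and define $y\in\mathbb{C}^{V(H)}$ by $y_u=x_u$ for $u\in V(H-e)$ and $y_v=0$ for $v\in C$. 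Then $y\neq 0$.

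First I would record the elementary identity $(\mathcal{A}_Hz)_u=\sum_{f\in E(H),\ f\ni u}\ \prod_{w\in f\setminus\{u\}}z_w$, valid for any vector $z$ and vertex $u$, which follows at once from the definition of $\mathcal{A}_H$ and of the product $\mathcal{A}_H z$, because each edge through $u$ supplies $(k-1)!$ identical ordered terms that cancel the factor $\frac{1}{(k-1)!}$. Then I would verify $(\mathcal{A}_Hy)_u=\lambda y_u^{k-1}$ in three cases. If $u\in V(H-e)$ and $u\notin e$, every edge through $u$ belongs to $H-e$ and lies in $V(H-e)$, so $(\mathcal{A}_Hy)_u=(\mathcal{A}_{H-e}x)_u=\lambda x_u^{k-1}=\lambda y_u^{k-1}$. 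If $u\in e\setminus C$, then the edges through $u$ are $e$ together with the edges of $H-e$ through $u$, so $(\mathcal{A}_Hy)_u=(\mathcal{A}_{H-e}x)_u+\prod_{w\in e\setminus\{u\}}y_w$; since $C\subseteq e\setminus\{u\}$ and $y$ vanishes on $C$, the extra product is $0$ and $(\mathcal{A}_Hy)_u=\lambda y_u^{k-1}$ as before. If $u=v\in C$, then $v$ lies only in $e$, so $(\mathcal{A}_Hy)_v=\prod_{w\in e\setminus\{v\}}y_w$; because $|C|\geqslant 2$, the set $e\setminus\{v\}$ still contains a core vertex, so this product is $0=\lambda\cdot 0^{k-1}=\lambda y_v^{k-1}$. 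Hence $\mathcal{A}_Hy=\lambda y^{[k-1]}$ and $\lambda$ is an eigenvalue of $H$.

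There is no real obstacle in this argument; the one idea is that setting every core vertex of $e$ to zero simultaneously annihilates the contribution of $e$ at every vertex incident with $e$, and this is precisely where the hypothesis $|C|\geqslant 2$ enters: with a single core vertex the eigenequation at that vertex would in general force its $y$-value to be nonzero, which would reinstate a contribution of $e$ at the non-core vertices of $e$ and break the construction. The only things to watch are the routine bookkeeping about $H-e$ recorded above and the degenerate cases (such as $e$ consisting entirely of core vertices, or $H-e$ having isolated vertices), all of which are already handled since the argument never uses connectedness or that $V(H-e)$ is nonempty.
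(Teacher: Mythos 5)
Your proposal is correct and is essentially the paper's own argument: extend the eigenvector by zero on the core vertices of $e$, with the hypothesis of at least two core vertices guaranteeing that the contribution of $e$ vanishes at every vertex of $e$, including the core vertices themselves. You simply spell out the case-by-case verification that the paper leaves implicit.
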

\begin{proof}
Suppose that $x$ is an eigenvector of the eigenvalue $\lambda$ of $H-e$. Let $y$ be a column vector of dimension $|V(H)|$ such that $y_u=x_u$ if $u\in V(H-e)$, and $y_u=0$ if $u\in V(H)$ is a core vertex in $e$. Since $\mathcal{A}_{H-e}x=\lambda x^{[k-1]}$, we have $\mathcal{A}_Hy=\lambda y^{[k-1]}$. So $\lambda$ is an eigenvalue of $H$.
\end{proof}
In \cite{HuQiShao}, Hu et al defined power hypergraphs as follows.
\begin{definition}\textup{\cite{HuQiShao}}
Let $G$ be an ordinary graph (i.e. $2$-uniform hypergraph). For any $k\geqslant3$, the $k$th power of $G$, denoted by $G^k$, is a $k$-uniform hypergraph with edge set $E(G^k)=\{e\cup\{i_{e,1},\ldots,i_{e,k-2}\}|e\in E(G)\}$, and vertex set $V(G^k)=V(G)\cup\{i_{e,j}|e\in E(G),j\in[k-2]\}$.
\end{definition}
Some examples of power hypergraphs are given in [7, Fig.1]. From Definition 4.2, we know that each edge of a power hypergraph $G^k$ contains two adjacent vertices in $V(G)$ and $k-2$ core vertices not in $V(G)$.

If $H$ is a connected $k$-uniform hypergraph, then $\mathcal{A}_H$ and $\mathcal{Q}_H$ are both weakly irreducible \cite{Qi-Laplaican}. So we obtain the following lemma from [13, Theorem 2.2].
\begin{lem}\label{lem4.3}
Let $H$ be a connected $k$-uniform hypergraph. If $\lambda$ is an eigenvalue of $\mathcal{A}_H$ ($\mathcal{Q}_H$) with a positive eigenvector, then $\lambda=\rho(\mathcal{A}_H)$ ($\lambda=\rho(\mathcal{Q}_H)$).
\end{lem}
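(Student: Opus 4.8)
The plan is to deduce both assertions from the Perron--Frobenius theory of nonnegative weakly irreducible tensors. Since $H$ is connected, both $\mathcal{A}_H$ and $\mathcal{Q}_H$ are weakly irreducible, as recalled just above the statement (from \cite{Qi-Laplaican}), and both are obviously nonnegative. Thus it suffices to prove the following: if $\mathcal{B}$ is a nonnegative weakly irreducible tensor of order $k\geqslant2$ and dimension $n$ and $\mathcal{B}x=\lambda x^{[k-1]}$ holds for some vector $x$ with all entries positive, then $\lambda=\rho(\mathcal{B})$. Applying this with $\mathcal{B}=\mathcal{A}_H$ gives the adjacency statement and with $\mathcal{B}=\mathcal{Q}_H$ the signless Laplacian statement.

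First I would record what is supplied by the quoted Perron--Frobenius theorem [13, Theorem 2.2]: $\rho(\mathcal{B})$ is an eigenvalue of $\mathcal{B}$ and it possesses a strictly positive eigenvector $y$, so that $\mathcal{B}y=\rho(\mathcal{B})y^{[k-1]}$. I would also observe that the eigenvalue $\lambda$ in question is automatically real: since $x$, and hence $x^{[k-1]}$, has positive real entries while $\mathcal{B}x$ is a real vector, we have $\lambda=(\mathcal{B}x)_1/x_1^{k-1}\in\mathbb{R}$.

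The heart of the argument is a two-sided comparison of $x$ with the Perron eigenvector $y$. Put $\alpha=\max_{i\in[n]}x_i/y_i$ and let $p$ be an index attaining it, so that $x_i\leqslant\alpha y_i$ for every $i$, with equality at $i=p$; in particular $(\alpha y_p)^{k-1}=x_p^{k-1}$. Using that $\mathcal{B}$ is nonnegative entrywise,
\begin{eqnarray*}
\lambda x_p^{k-1}&=&(\mathcal{B}x)_p=\sum_{i_2,\ldots,i_k\in[n]}b_{pi_2\cdots i_k}x_{i_2}\cdots x_{i_k}\\
&\leqslant&\alpha^{k-1}\sum_{i_2,\ldots,i_k\in[n]}b_{pi_2\cdots i_k}y_{i_2}\cdots y_{i_k}=\alpha^{k-1}\rho(\mathcal{B})y_p^{k-1}=\rho(\mathcal{B})x_p^{k-1},
\end{eqnarray*}
and dividing by $x_p^{k-1}>0$ yields $\lambda\leqslant\rho(\mathcal{B})$. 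Repeating the same computation with $\beta=\min_{i\in[n]}x_i/y_i$ reverses every inequality and gives $\lambda\geqslant\rho(\mathcal{B})$, so $\lambda=\rho(\mathcal{B})$.

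I do not expect a genuine obstacle here: the only points that really need attention are that the Perron eigenvector $y$ furnished by [13, Theorem 2.2] is strictly positive (so the ratios $x_i/y_i$ are well defined) and that the displayed estimate is phrased uniformly for both $\mathcal{A}_H$ and $\mathcal{Q}_H$. Indeed, if [13, Theorem 2.2] already states that $\rho(\mathcal{B})$ is the \emph{only} eigenvalue of a nonnegative weakly irreducible tensor admitting a positive eigenvector, then the lemma is immediate and the comparison computation above can be omitted.
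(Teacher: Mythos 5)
Your proposal is correct and takes essentially the same route as the paper: the paper obtains the lemma directly from the weak irreducibility of $\mathcal{A}_H$ and $\mathcal{Q}_H$ for connected $H$ together with the Perron--Frobenius theorem for nonnegative weakly irreducible tensors (cited as [13, Theorem 2.2]), exactly as in your first paragraph. Your additional two-sided comparison of $x$ with a positive Perron eigenvector is a correct, self-contained backup argument, but the paper simply invokes the cited theorem and gives no further computation.
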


\begin{thm}\label{thm4.4}
If $\lambda\neq0$ is an eigenvalue of a graph $G$, then $\lambda^{\frac{2}{k}}$ is an eigenvalue of $G^k$. Moreover, $\rho(\mathcal{A}_{G^k})=\rho(\mathcal{A}_G)^{\frac{2}{k}}$.
\end{thm}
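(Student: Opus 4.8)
The plan is to translate the eigenvalue equations of $\mathcal{A}_{G^k}$ back into those of $\mathcal{A}_G$. Recall that each edge of $G^k$ has the form $\{u,v\}\cup\{c_1^e,\dots,c_{k-2}^e\}$, where $e=uv\in E(G)$ and $c_1^e,\dots,c_{k-2}^e$ are the core vertices belonging to $e$. For a vector $x$ indexed by $V(G^k)$, the equation $\mathcal{A}_{G^k}x=\mu x^{[k-1]}$ reads, at an original vertex $w\in V(G)$,
\[
\mu x_w^{k-1}=\sum_{u:\,uw\in E(G)}x_u\prod_{j=1}^{k-2}x_{c_j^{uw}},
\]
and, at a core vertex $c_j^e$ of $e=uv$,
\[
\mu x_{c_j^e}^{k-1}=x_ux_v\prod_{l\neq j}x_{c_l^e}.
\]
The idea for the first assertion is to fix a value $\mu$ with $\mu^k=\lambda^2$ and build an eigenvector in which all core vertices of a given edge receive a common value.

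For the first statement, fix $\nu\in\mathbb{C}$ with $\nu^k=\lambda$ and put $\mu=\nu^2$; then $\mu\neq0$ and $\mu^k=\lambda^2$. Let $z$ be an eigenvector of $\mathcal{A}_G$ for $\lambda$, choose for each $w\in V(G)$ a number $\xi_w$ with $\xi_w^k=z_w$, and set $x_w:=\xi_w^2$; to every core vertex of $e=uv\in E(G)$ assign the value $t_e:=\xi_u\xi_v/\nu$. I would then simply substitute. At a core vertex of $e=uv$ one has $\mu t_e^2=\nu^2(\xi_u\xi_v/\nu)^2=\xi_u^2\xi_v^2=x_ux_v$, so $\mu t_e^{k-1}=x_ux_v\,t_e^{k-3}$, which is exactly the required equation. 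At $w\in V(G)$ one computes $x_u t_{uw}^{k-2}=\xi_u^2(\xi_u\xi_w/\nu)^{k-2}=\xi_u^k\xi_w^{k-2}\nu^{-(k-2)}=z_u\,\xi_w^{k-2}\nu^{-(k-2)}$, and summing over the neighbours of $w$ and using $\sum_{u\sim w}z_u=\lambda z_w=\nu^k\xi_w^k$ yields $\nu^2\xi_w^{2k-2}=\mu x_w^{k-1}$, again the required equation. Since every step uses only integer powers, no branch choices intervene and the identities remain valid when some $z_w=0$. As $z\neq0$ forces $x_w\neq0$ for some $w$, the vector $x$ is a genuine eigenvector of $\mathcal{A}_{G^k}$, so $\mu=\lambda^{2/k}$ is an eigenvalue of $G^k$.

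For the ``moreover'' part I would first treat the lower bound: if $\rho(\mathcal{A}_G)=0$ then $G$, hence $G^k$, has no edge and the identity is trivial, so assume $\rho(\mathcal{A}_G)>0$. Applying the construction above with $\lambda=\rho(\mathcal{A}_G)$, $z$ the nonnegative Perron eigenvector, $\nu=\rho(\mathcal{A}_G)^{1/k}>0$ and $\xi_w=z_w^{1/k}\geq0$ produces a nonnegative eigenvector for the real eigenvalue $\mu=\rho(\mathcal{A}_G)^{2/k}$, so $\rho(\mathcal{A}_{G^k})\geq\rho(\mathcal{A}_G)^{2/k}$. For the reverse inequality, reduce to $G$ connected: $G^k$ is the disjoint union of the hypergraphs $G_i^k$ over the components $G_i$ of $G$ (single-vertex components contributing $0$ to both sides), so $\mathcal{A}_{G^k}$ is block diagonal and $\rho(\mathcal{A}_{G^k})=\max_i\rho(\mathcal{A}_{G_i^k})$ while $\rho(\mathcal{A}_G)=\max_i\rho(\mathcal{A}_{G_i})$, and since $s\mapsto s^{2/k}$ is increasing on $[0,\infty)$ the connected case suffices. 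So let $G$ be connected; then $G^k$ is connected, $\mathcal{A}_{G^k}$ is nonnegative and weakly irreducible, and $\mu:=\rho(\mathcal{A}_{G^k})$ admits a positive eigenvector $x>0$. Multiplying the core equation at $c_j^e$ by $x_{c_j^e}$ gives $\mu x_{c_j^e}^k=x_ux_v\prod_{l}x_{c_l^e}$, whose right-hand side does not depend on $j$; since $\mu>0$ and the entries are positive, all core vertices of $e=uv$ share a value $s_e$ with $\mu s_e^2=x_ux_v$. Writing $\zeta_w:=x_w^{1/2}>0$ and $\eta:=\mu^{1/2}>0$, we get $s_{uw}=\zeta_u\zeta_w/\eta$, and substituting into the equation at $w\in V(G)$ gives $\sum_{u\sim w}\zeta_u^k=\eta^k\zeta_w^k$; that is, $y:=(\zeta_w^k)_{w\in V(G)}>0$ satisfies $\mathcal{A}_Gy=\eta^k y$. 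By Lemma~\ref{lem4.3}, $\eta^k=\rho(\mathcal{A}_G)$, i.e. $\mu=\rho(\mathcal{A}_G)^{2/k}$, which with the lower bound gives equality.

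The two substitutions above are routine; I expect the only delicate points to be organisational, namely choosing the auxiliary roots $\xi_w,\nu$ (and later $\zeta_w,\eta$) so that the computation stays within integer powers and survives zero entries of the eigenvector, and, for the spectral radius, invoking the Perron--Frobenius theory of nonnegative weakly irreducible tensors to obtain the positive eigenvector $x$ of $\mathcal{A}_{G^k}$ and to reduce the disconnected case to the connected one.
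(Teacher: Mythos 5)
Your proof is correct. For the first assertion you use exactly the paper's construction: lift an eigenvector $z$ of $\mathcal{A}_G$ by putting $x_w=\xi_w^2$ on original vertices and $\xi_u\xi_v/\nu$ on the core vertices of the edge over $uv$ (the paper writes these as $(z_w)^{2/k}$ and $(\lambda^{-1}z_uz_v)^{1/k}$), and the two verifications match the paper's line for line; your explicit bookkeeping of the $k$-th roots $\nu,\xi_w$ is in fact cleaner than the paper's fractional-power notation. Where you diverge is the ``moreover'' part. The paper closes it in one step: for connected $G$ the lift of the positive Perron vector of $\mathcal{A}_G$ is a positive eigenvector of $\mathcal{A}_{G^k}$ for $\rho(\mathcal{A}_G)^{2/k}$, and Lemma~\ref{lem4.3} applied to $G^k$ immediately identifies this value as $\rho(\mathcal{A}_{G^k})$. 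You instead prove two inequalities: the lower bound from the lifting, and the upper bound by invoking Perron--Frobenius for weakly irreducible nonnegative tensors to get a positive eigenvector of $\mathcal{A}_{G^k}$, showing the core entries of each edge coincide, and descending to a positive eigenvector of $\mathcal{A}_G$. This descent is a nice converse to the lifting and is independently valuable (it shows every positive eigenpair of $\mathcal{A}_{G^k}$ comes from one of $\mathcal{A}_G$), but it costs you an extra appeal to the existence of the Perron vector of the hypergraph tensor, which the paper's one-directional argument avoids. Both routes reduce the disconnected case to components in the same way.
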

\begin{proof}
Suppose that $x$ is an eigenvector of the eigenvalue $\lambda\neq0$ of graph $G$. Then $\sum_{j\in N_G(i)}x_j=\lambda x_i$ for any $i\in V(G)$, where $N_G(i)$ is the set of all neighbors of $i$ in $G$. Let $y$ be a column vector of dimension $|V(G^k)|$ such that $y_u=(x_u)^{\frac{2}{k}}$ if $u\in V(G)$, and $y_u=(\lambda^{-1}x_ix_j)^{\frac{1}{k}}$ if $u\in V(G^k)\setminus V(G)$ is a core vertex in the edge contains two adjacent vertices $i,j\in V(G)$. For any $i\in V(G)$, by $\sum_{j\in N_G(i)}x_j=\lambda x_i$, we have
\begin{eqnarray*}
(\mathcal{A}_{G^k}y)_i=\sum_{j\in N_G(i)}(\lambda^{-1}x_ix_j)^{\frac{k-2}{k}}(x_j)^{\frac{2}{k}}=\lambda^{\frac{2}{k}}(x_i)^{\frac{2(k-1)}{k}}=\lambda^{\frac{2}{k}}(y_i)^{k-1}.
\end{eqnarray*}
For any $u\in V(G^k)\setminus V(G)$, we have
\begin{eqnarray*}
(\mathcal{A}_{G^k}y)_u=(\lambda^{-1}x_ix_j)^{\frac{k-3}{k}}(x_i)^{\frac{2}{k}}(x_j)^{\frac{2}{k}}=\lambda^{\frac{2}{k}}(\lambda^{-1}x_ix_j)^{\frac{k-1}{k}}=\lambda^{\frac{2}{k}}(y_u)^{k-1}.
\end{eqnarray*}
Hence $\lambda^{\frac{2}{k}}$ is an eigenvalue of $G^k$ with an eigenvector $y$.

If $G$ is connected and $\lambda=\rho(\mathcal{A}_G)$, then we can choose $x$ as a positive eigenvector of $\rho(\mathcal{A}_G)$. In this case, $y$ is a positive eigenvector of the eigenvalue $\rho(\mathcal{A}_G)^{\frac{2}{k}}$ of $G^k$. Lemma \ref{lem4.3} implies that $\rho(\mathcal{A}_{G^k})=\rho(\mathcal{A}_G)^{\frac{2}{k}}$ when $G$ is connected.

If $G$ has $r\geqslant2$ components $G_1,\ldots,G_r$, then
\begin{eqnarray*}
\rho(\mathcal{A}_{G^k})=\max\{\rho(\mathcal{A}_{G_1^k}),\ldots,\rho(\mathcal{A}_{G_r^k})\}=\max\{\rho(\mathcal{A}_{G_1})^{\frac{2}{k}},\ldots,\rho(\mathcal{A}_{G_r})^{\frac{2}{k}}\}=\rho(\mathcal{A}_G)^{\frac{2}{k}}.
\end{eqnarray*}
\end{proof}
We can obtain the following result from Theorem \ref{thm4.4}.
\begin{cor}
For any nontrivial graph $G$, we have $\lim_{k\rightarrow\infty}\rho(\mathcal{A}_{G^k})=1$. Moreover, $\{\rho(\mathcal{A}_{G^k})\}$ is a strictly decreasing sequence if $\rho(\mathcal{A}_G)>1$.
\end{cor}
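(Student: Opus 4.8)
The plan is to deduce everything from the identity $\rho(\mathcal{A}_{G^k})=\rho(\mathcal{A}_G)^{2/k}$ established in Theorem \ref{thm4.4}. First I would note that a nontrivial graph $G$ contains at least one edge, so $K_2$ is a subgraph of $G$ and hence $\rho(\mathcal{A}_G)\geqslant\rho(\mathcal{A}_{K_2})=1>0$. In particular $\rho(\mathcal{A}_G)\neq 0$, so Theorem \ref{thm4.4} is applicable and yields $\rho(\mathcal{A}_{G^k})=\rho(\mathcal{A}_G)^{2/k}$ for every $k\geqslant 3$.

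For the limit, set $a=\rho(\mathcal{A}_G)>0$. Then $\rho(\mathcal{A}_{G^k})=a^{2/k}=\exp\!\big(\tfrac{2}{k}\ln a\big)$, and since $\tfrac{2}{k}\ln a\to 0$ as $k\to\infty$ and $\exp$ is continuous, $\lim_{k\to\infty}\rho(\mathcal{A}_{G^k})=e^{0}=1$.

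For the strict monotonicity, suppose in addition that $a=\rho(\mathcal{A}_G)>1$, so that $\ln a>0$. The exponents $\tfrac{2}{k}\ln a$ form a strictly decreasing sequence in $k$, and $\exp$ is strictly increasing on $\mathbb{R}$, hence $a^{2/(k+1)}<a^{2/k}$ for all $k\geqslant 3$; that is, $\{\rho(\mathcal{A}_{G^k})\}$ is strictly decreasing.

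There is no real obstacle here: the only point worth a word of justification is that $\rho(\mathcal{A}_G)>0$ for a nontrivial $G$, so that the exponentiation $\rho(\mathcal{A}_G)^{2/k}$ is meaningful and the limit is as claimed, and this is immediate from the presence of an edge. The remaining steps are elementary real analysis applied to the formula from Theorem \ref{thm4.4}.
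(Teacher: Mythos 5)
Your proposal is correct and is exactly the argument the paper intends: the corollary is stated as an immediate consequence of the identity $\rho(\mathcal{A}_{G^k})=\rho(\mathcal{A}_G)^{2/k}$ from Theorem \ref{thm4.4}, combined with the observation $\rho(\mathcal{A}_G)\geqslant 1$ for a nontrivial graph and elementary properties of $a\mapsto a^{2/k}$. The paper omits the proof entirely, so your write-up simply supplies the routine details it leaves to the reader.
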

The following corollary follows from Theorem \ref{thm4.1} and \ref{thm4.4}.
\begin{cor}
If $\lambda\neq0$ is an eigenvalue of any subgraph of a graph $G$, then $\lambda^{\frac{2}{k}}$ is an eigenvalue of $G^k$ for $k\geqslant4$.
\end{cor}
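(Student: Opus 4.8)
The plan is to run Theorem~\ref{thm4.4} on the given subgraph and then transport the eigenvalue back up to $G^k$ with repeated applications of Theorem~\ref{thm4.1}. Let $G'$ be a subgraph of $G$ and let $\lambda\neq0$ be an eigenvalue of $G'$. By Theorem~\ref{thm4.4} (applied to $G'$, which need not be connected), $\lambda^{2/k}$ is an eigenvalue of the power hypergraph $(G')^k$. Since $E(G')\subseteq E(G)$, the hypergraph $(G')^k$ is what is left of $G^k$ after deleting, one at a time, the edges $\widehat e:=e\cup\{i_{e,1},\ldots,i_{e,k-2}\}$ for $e\in E(G)\setminus E(G')$ together with their core vertices. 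It therefore suffices to show that each such deletion preserves, in the reverse direction, the property of having $\lambda^{2/k}$ as an eigenvalue.

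In detail, I would enumerate $E(G)\setminus E(G')$ as $e_1,\ldots,e_m$, set $H_0=G^k$, and put $H_t=H_{t-1}-\widehat{e_t}$ for $t=1,\ldots,m$, with $H-e$ as defined in Section~4. At step $t$ the edge $\widehat{e_t}$ is still present in $H_{t-1}$ (it survives the earlier deletions, since a core vertex removed at an earlier step lies in a single edge of the current hypergraph, hence not in $\widehat{e_t}$), and each of the $k-2$ added vertices $i_{e_t,1},\ldots,i_{e_t,k-2}$ has degree one in $H_{t-1}$ because it lies in no edge other than $\widehat{e_t}$. As $k\geqslant4$ we have $k-2\geqslant2$, so $\widehat{e_t}$ contains at least two core vertices of $H_{t-1}$ and Theorem~\ref{thm4.1} applies: every eigenvalue of $H_t$ is an eigenvalue of $H_{t-1}$. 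Reading this chain from $t=m$ down to $t=1$, every eigenvalue of $H_m$ is an eigenvalue of $H_0=G^k$.

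Finally, $H_m$ has edge set exactly $\{\widehat e:e\in E(G')\}=E((G')^k)$, so $H_m$ and $(G')^k$ differ at most by isolated vertices; restricting or extending by zeros an eigenvector of $\mathcal{A}_{(G')^k}$ for $\lambda^{2/k}$ produces such an eigenvector for $\mathcal{A}_{H_m}$ (here $\lambda\neq0$ guarantees the eigenvector is not supported only on isolated vertices, and $0^{k-1}=0$ handles those vertices). Hence $\lambda^{2/k}\in\sigma(\mathcal{A}_{H_m})$, and by the previous paragraph $\lambda^{2/k}\in\sigma(\mathcal{A}_{G^k})$. I expect the one point genuinely needing care to be the per-step verification that the deleted edge carries at least two core vertices: this is exactly where the hypothesis $k\geqslant4$ is used, since for $k=3$ the edge $\widehat e$ supplies only the single new core vertex $i_{e,1}$ and Theorem~\ref{thm4.1} may no longer apply. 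The remaining bookkeeping about which vertices of $V(G)$ get removed along the way only affects the harmless isolated-vertex discrepancy noted above.
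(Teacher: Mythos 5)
Your argument is correct and is exactly the route the paper intends: the paper states only that the corollary ``follows from Theorem \ref{thm4.1} and \ref{thm4.4},'' and you have filled in precisely that chain (apply Theorem \ref{thm4.4} to the subgraph, then lift the eigenvalue back to $G^k$ edge by edge via Theorem \ref{thm4.1}). Your per-step checks — that each deleted edge still carries its $k-2\geqslant 2$ added core vertices, and that the leftover discrepancy is only isolated vertices handled by zero-extension since $\lambda^{2/k}\neq 0$ — are the right details and correctly locate where $k\geqslant 4$ is used.
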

Let $P_n$ and $S_n$ be the path and the star of order $n$, respectively. The following result was proved by Li et al \cite{LiShaoQi}. Here we give a different proof.
\begin{cor}
Let $T$ be a tree with $n$ vertices. Then
\begin{eqnarray*}
\rho(\mathcal{A}_{P_n^k})\leqslant\rho(\mathcal{A}_{T^k})\leqslant\rho(\mathcal{A}_{S_n^k}),
\end{eqnarray*}
where the left equality holds if and only if $T=P_n$, and the right equality holds if and only if $T=S_n$.
\end{cor}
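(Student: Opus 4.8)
The plan is to reduce both inequalities, via Theorem \ref{thm4.4}, to the classical extremal problem for the adjacency spectral radius of ordinary trees. If $n\leqslant 2$ then $T=P_n=S_n$ and there is nothing to prove, so assume $n\geqslant 3$. Then $T$, $P_n$ and $S_n$ are connected and contain an edge, so each of $\rho(\mathcal{A}_T)$, $\rho(\mathcal{A}_{P_n})$, $\rho(\mathcal{A}_{S_n})$ is at least $1$, in particular nonzero. Hence Theorem \ref{thm4.4} applies to all three and gives
\begin{eqnarray*}
\rho(\mathcal{A}_{T^k})=\rho(\mathcal{A}_T)^{\frac{2}{k}},\qquad \rho(\mathcal{A}_{P_n^k})=\rho(\mathcal{A}_{P_n})^{\frac{2}{k}},\qquad \rho(\mathcal{A}_{S_n^k})=\rho(\mathcal{A}_{S_n})^{\frac{2}{k}}.
\end{eqnarray*}
Since $k\geqslant 3$, the map $t\mapsto t^{2/k}$ is a strictly increasing bijection of $[0,\infty)$ onto itself, so the chain of inequalities in the corollary, together with its two equality cases, is equivalent to
\begin{eqnarray*}
\rho(\mathcal{A}_{P_n})\leqslant\rho(\mathcal{A}_T)\leqslant\rho(\mathcal{A}_{S_n})
\end{eqnarray*}
holding for every tree $T$ on $n$ vertices, with equality on the left if and only if $T=P_n$ and equality on the right if and only if $T=S_n$.

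I would prove the upper bound directly. The star has nonzero eigenvalues $\pm\sqrt{n-1}$, so $\rho(\mathcal{A}_{S_n})=\sqrt{n-1}$. For an arbitrary tree $T$, take a unit positive Perron eigenvector $x$ of $\mathcal{A}_T$; then $\rho(\mathcal{A}_T)^2=x^\top\mathcal{A}_T^2x=\sum_{u\in V(T)}\bigl(\sum_{v\sim u}x_v\bigr)^2$, and Cauchy--Schwarz gives $\bigl(\sum_{v\sim u}x_v\bigr)^2\leqslant d_u\sum_{v\sim u}x_v^2$, whence $\rho(\mathcal{A}_T)^2\leqslant\sum_{v\in V(T)}x_v^2\sum_{u\sim v}d_u$. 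In a tree $\sum_{u\sim v}d_u$ equals $d_v$ plus the number of vertices at distance $2$ from $v$, so it is at most $n-1$; therefore $\rho(\mathcal{A}_T)\leqslant\sqrt{n-1}$, and tracing back the two inequalities shows that equality forces $x$ to be constant on each neighbourhood and $T$ to have diameter at most $2$, i.e.\ $T=S_n$.

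For the lower bound I would invoke (or reprove) the classical theorem of Collatz and Sinogowitz that $P_n$ is the unique $n$-vertex tree of minimum adjacency spectral radius: if $T\neq P_n$ then $T$ has a vertex of degree at least $3$, and relocating an edge near such a branch vertex so as to lengthen a pendant path at the expense of another branch strictly decreases the Perron value while preserving the vertex count, so iterating reaches $P_n$ with a strict decrease at each step. The step I expect to be the main obstacle is precisely this one --- showing the path minimizes the index among $n$-vertex trees --- since the underlying edge-relocation lemma requires a careful comparison of Perron vectors before and after the move; the reduction to graphs and the star bound above are both short. Once these two graph facts are in hand, the corollary follows immediately from Theorem \ref{thm4.4} and the strict monotonicity of $t\mapsto t^{2/k}$.
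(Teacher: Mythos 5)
Your proposal is correct and follows essentially the same route as the paper: reduce via Theorem \ref{thm4.4} and the strict monotonicity of $t\mapsto t^{2/k}$ to the classical fact that $P_n$ and $S_n$ are the unique minimizer and maximizer of the adjacency spectral radius among $n$-vertex trees. The paper simply cites \cite{Cvetkovic} for that classical fact, whereas you (correctly) reprove the star bound and sketch the path bound, so the only difference is how much of the underlying graph-theoretic result is taken off the shelf.
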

\begin{proof}
Among all trees with $n$ vertices, $P_n$ is the unique tree with the smallest adjacency spectral radius, and $S_n$ is the unique tree with the largest adjacency spectral radius \cite{Cvetkovic}. By Theorem \ref{thm4.4}, we have \begin{eqnarray*}
\rho(\mathcal{A}_{P_n^k})\leqslant\rho(\mathcal{A}_{T^k})\leqslant\rho(\mathcal{A}_{S_n^k}),
\end{eqnarray*}
where the left equality holds if and only if $T=P_n$, and the right equality holds if and only if $T=S_n$.
\end{proof}

\begin{thm}\label{thm4.7}
If $\alpha\neq0$ is an eigenvalue of a $d$-regular graph $G$, then the roots of $(x-d)(x-1)^{\frac{k-2}{2}}-\alpha=0$ are signless Laplacian eigenvalues of $G^k$. Moreover, $\rho(\mathcal{Q}_{G^k})$ is the largest real root of $(x-d)(x-1)^{\frac{k-2}{2}}-d=0$.
\end{thm}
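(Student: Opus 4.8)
The plan is to imitate the eigenvector construction from the proof of Theorem \ref{thm4.4}, now adapted to the signless Laplacian tensor. The starting point is that in $G^k$ every vertex of $G$ has degree $d$ and every core vertex has degree $1$, so $\mathcal{Q}_{G^k}y=xy^{[k-1]}$ is equivalent to the scalar equations
\begin{eqnarray*}
(\mathcal{A}_{G^k}y)_i=(x-d)y_i^{k-1}\ \ (i\in V(G)),\qquad (\mathcal{A}_{G^k}y)_u=(x-1)y_u^{k-1}\ \ (u\ \text{a core vertex}).
\end{eqnarray*}

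First I would fix an eigenvector $z$ of $\mathcal{A}_G$ for $\alpha$, so that $\sum_{j\in N_G(i)}z_j=\alpha z_i$ for all $i\in V(G)$, fix complex numbers $w_i$ with $w_i^k=z_i$, and, for a given root $x$ of $(x-d)(x-1)^{\frac{k-2}{2}}=\alpha$ (note $x\neq1$, since $\alpha\neq0$), fix a number $c$ with $(x-1)c^2=1$. Then I define $y$ on $V(G^k)$ by $y_i=w_i^2$ for $i\in V(G)$ and $y_u=c\,w_iw_j$ for a core vertex $u$ lying in the edge of $G^k$ that comes from $ij\in E(G)$. The core-vertex equation reduces, after cancellation, to $(x-1)c^2=1$ (using that each core vertex lies in exactly one edge), and the equation at $i\in V(G)$ produces the factor $\sum_{j\in N_G(i)}z_j=\alpha z_i$ and then collapses to $(x-d)(x-1)^{\frac{k-2}{2}}=\alpha$ once one substitutes $w_i^k=z_i$ and $c^{k-2}=(x-1)^{-(k-2)/2}$. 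Since $z\neq0$, some $w_i\neq0$, so $y\neq0$, and therefore every such $x$ is a signless Laplacian eigenvalue of $G^k$; the verification stays correct even when some $z_i$ vanish, since both sides of the affected equations are then $0$.

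For the ``moreover'' statement I would specialize to $\alpha=d$ with $z$ the all-ones vector (so $w_i\equiv1$); then the roots of $(x-d)(x-1)^{\frac{k-2}{2}}=d$ are signless Laplacian eigenvalues of $G^k$. The function $g(x)=(x-d)(x-1)^{\frac{k-2}{2}}$ is continuous and strictly increasing on $(d,\infty)$, with $g(d)=0<d$ (here $d\ge1$, since $G$ has an edge) and $g(x)\to\infty$, hence it has a unique root $x_0>d$ there, and $x_0$ is the largest real root of $(x-d)(x-1)^{\frac{k-2}{2}}-d=0$. Since $x_0>d\ge1$ one may choose $c=(x_0-1)^{-1/2}>0$, and then the vector $y$ above has $y_i=1$ on $V(G)$ and $y_u=c>0$ on every core vertex, i.e.\ it is a positive eigenvector of $\mathcal{Q}_{G^k}$ for $x_0$. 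If $G$ is connected, then so is $G^k$, $\mathcal{Q}_{G^k}$ is weakly irreducible, and Lemma \ref{lem4.3} forces $x_0=\rho(\mathcal{Q}_{G^k})$; for disconnected $G$ one applies this to each ($d$-regular) component and takes the maximum, exactly as in the last paragraph of the proof of Theorem \ref{thm4.4}, the same $x_0$ serving for all components.

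The only real difficulty is bookkeeping: choosing the branches $w_i$ of $z_i^{1/k}$ and $c$ of $(x-1)^{-1/2}$ once and for all so that the identities $w_i^k=z_i$, $(x-1)c^2=1$ and $c^{k-2}=(x-1)^{-(k-2)/2}$ hold on the nose, and then carrying out the two vertex-equation checks together with the monotonicity argument for $g$. I expect these routine verifications --- rather than any conceptual step --- to be where the work lies; the rest follows from the Perron--Frobenius-type Lemma \ref{lem4.3} and the decomposition argument already used for Theorem \ref{thm4.4}.
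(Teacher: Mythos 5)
Your proposal is correct and follows essentially the same route as the paper: the same eigenvector construction ($y_i=z_i^{2/k}$ on $V(G)$, $y_u=(x-1)^{-1/2}(z_iz_j)^{1/k}$ on core vertices), the same two vertex-type verifications, and the same use of Lemma \ref{lem4.3} plus a component-wise reduction for the spectral radius. Your extra care with fixed branches $w_i$, $c$ and the monotonicity argument identifying $x_0$ as the largest real root only makes explicit details the paper leaves implicit.
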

\begin{proof}
Suppose that $x$ is an eigenvector of the eigenvalue $\alpha\neq0$ of graph $G$. Then $\sum_{j\in N_G(i)}x_j=\alpha x_i$ for any $i\in V(G)$, where $N_G(i)$ is the set of all neighbors of $i$ in $G$. Let $\lambda\in\mathbb{C}$ be any number such that $(\lambda-d)(\lambda-1)^{\frac{k-2}{2}}=\alpha$, then $\lambda\neq1$. Let $y$ be a column vector of dimension $|V(G^k)|$ such that $y_u=(x_u)^{\frac{2}{k}}$ if $u\in V(G)$, and $y_u=(\lambda-1)^{-\frac{1}{2}}(x_ix_j)^{\frac{1}{k}}$ if $u\in V(G^k)\setminus V(G)$ is a core vertex in the edge contains two adjacent vertices $i,j\in V(G)$. For any $i\in V(G)$, by $\sum_{j\in N_G(i)}x_j=\alpha x_i$ and $(\lambda-d)(\lambda-1)^{\frac{k-2}{2}}=\alpha$, we have
\begin{eqnarray*}
(\mathcal{Q}_{G^k}y)_i=d(x_i)^{\frac{2(k-1)}{k}}+\sum_{j\in N_G(i)}(\lambda-1)^{-\frac{k-2}{2}}(x_ix_j)^{\frac{k-2}{k}}(x_j)^{\frac{2}{k}}=\lambda(x_i)^{\frac{2(k-1)}{k}}=\lambda(y_i)^{k-1}.
\end{eqnarray*}
For any $u\in V(G^k)\setminus V(G)$, we have
\begin{eqnarray*}
(\mathcal{Q}_{G^k}y)_u&=&(\lambda-1)^{-\frac{k-1}{2}}(x_ix_j)^{\frac{k-1}{k}}+(\lambda-1)^{-\frac{k-3}{2}}(x_ix_j)^{\frac{k-3}{k}}(x_i)^{\frac{2}{k}}(x_j)^{\frac{2}{k}}\\
&=&\lambda(\lambda-1)^{-\frac{k-1}{2}}(x_ix_j)^{\frac{k-1}{k}}=\lambda(y_u)^{k-1}.
\end{eqnarray*}
Hence $\lambda$ is a signless Laplacian eigenvalue of $G^k$ with an eigenvector $y$.

If $G$ is connected and $\alpha=d=\rho(\mathcal{A}_G)$, then we can choose $x$ as a positive eigenvector of $\rho(\mathcal{A}_G)$. In this case, $y$ is a positive eigenvector of the signless Laplacian eigenvalue $\lambda$ of $G^k$. Lemma \ref{lem4.3} implies that $\rho(\mathcal{Q}_{G^k})$ is the largest real root of $(x-d)(x-1)^{\frac{k-2}{2}}-d=0$ when $G$ is connected.

If $G$ has $r\geqslant2$ components $G_1,\ldots,G_r$, then
\begin{eqnarray*}
\rho(\mathcal{Q}_{G^k})=\max\{\rho(\mathcal{Q}_{G_1^k}),\ldots,\rho(\mathcal{Q}_{G_r^k})\}.
\end{eqnarray*}
Since $G_1,\ldots,G_r$ are connected $d$-regular graphs, we know that $\rho(\mathcal{Q}_{G^k})=\rho(\mathcal{Q}_{G_1^k})=\cdots=\rho(\mathcal{Q}_{G_r^k})$ is equal to the largest real root of $(x-d)(x-1)^{\frac{k-2}{2}}-d=0$.
\end{proof}
The following corollary follows from Theorem \ref{thm4.7}.
\begin{cor}
For any $d$-regular graph $G$, we have $\lim_{k\rightarrow\infty}\rho(\mathcal{Q}_{G^k})=d$. Moreover, $\rho(\mathcal{Q}_{G^k})$ is a strictly decreasing sequence if $d>1$.
\end{cor}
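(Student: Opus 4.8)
The plan is to reduce everything to the one-variable real root problem provided by Theorem~\ref{thm4.7}. Write $\rho_k=\rho(\mathcal{Q}_{G^k})$ and, for $x>1$, set $g_k(x)=(x-d)(x-1)^{\frac{k-2}{2}}$; on $(1,\infty)$ this is a genuine positive-valued real function even when $k$ is odd, so the half-integer exponent is harmless, and by Theorem~\ref{thm4.7} the number $\rho_k$ is the largest real root of $g_k(x)=d$. We may assume $d\geq2$: for $d=0$ the hypergraph $G^k$ is edgeless and $\rho_k=0=d$, while $d=1$ forces $G$ to be a matching and $G^k$ a disjoint union of single hyperedges, for which a direct computation gives $\rho_k\equiv2$ (so one focuses on $d\geq2$, which for integer degrees is exactly the hypothesis $d>1$ of the second assertion).

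First I would pin down $\rho_k$. On $(d,\infty)$ both factors $x-d$ and $(x-1)^{\frac{k-2}{2}}$ are positive and strictly increasing (here $x-1>d-1\geq1$), so $g_k$ is continuous and strictly increasing there, with $g_k(d^{+})=0$ and $g_k(x)\to\infty$ as $x\to\infty$. Hence $g_k(x)=d$ has a unique solution in $(d,\infty)$, and $g_k(x)>d$ for every larger $x$; since any real root that is $\leq d$ is smaller than this solution, the solution is the largest real root, i.e.\ it equals $\rho_k$. In particular $\rho_k>d$ for all $k$, and $g_k$ is strictly increasing on $(d,\infty)$ for each $k$.

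For the limit I would use the defining identity $(\rho_k-d)(\rho_k-1)^{\frac{k-2}{2}}=d$. If $\rho_k\not\to d$, then since $\rho_k>d$ always there are $\varepsilon>0$ and a subsequence with $\rho_{k_j}\geq d+\varepsilon$; then $\rho_{k_j}-1\geq d-1+\varepsilon\geq1+\varepsilon$, so $(\rho_{k_j}-1)^{\frac{k_j-2}{2}}\geq(1+\varepsilon)^{\frac{k_j-2}{2}}\to\infty$ while $\rho_{k_j}-d\geq\varepsilon$, forcing the left side of the identity to tend to $\infty$ and contradicting its constant value $d$; hence $\rho_k\to d$. For strict monotonicity, fix $k$; since $\rho_k>d\geq2$ we have $\rho_k-1>1$, so
\begin{equation*}
g_{k+1}(\rho_k)=(\rho_k-d)(\rho_k-1)^{\frac{k-1}{2}}=\big[(\rho_k-d)(\rho_k-1)^{\frac{k-2}{2}}\big](\rho_k-1)^{\frac12}=d\,(\rho_k-1)^{\frac12}>d .
\end{equation*}
By the previous paragraph $g_{k+1}$ is strictly increasing on $(d,\infty)$ and $\rho_{k+1}$ is the unique root of $g_{k+1}(x)=d$ there, so $g_{k+1}(\rho_k)>d=g_{k+1}(\rho_{k+1})$ yields $\rho_k>\rho_{k+1}$; thus $\{\rho_k\}$ is strictly decreasing.

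The step I expect to be the only real subtlety is identifying the abstract ``largest real root'' of Theorem~\ref{thm4.7} with the concrete root of $g_k$ in $(d,\infty)$ — this is what simultaneously powers the monotonicity comparison and the growth estimate used for the limit; once that identification is secured, the remainder is elementary calculus.
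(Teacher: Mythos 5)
Your proof is correct, and since the paper offers no argument beyond ``follows from Theorem~\ref{thm4.7}'', your elementary analysis of the largest real root of $(x-d)(x-1)^{\frac{k-2}{2}}=d$ on $(d,\infty)$ --- uniqueness and the bound $\rho_k>d$ there, the growth estimate for the limit, and the comparison $g_{k+1}(\rho_k)=d(\rho_k-1)^{1/2}>d$ for monotonicity --- is exactly the intended fleshing-out. One remark: your own computation for $d=1$ gives $\rho_k\equiv 2$, which means the first assertion of the corollary as literally stated fails for $1$-regular $G$ (the limit is $2$, not $d=1$); you should flag this explicitly rather than silently folding it into the reduction to $d\geqslant 2$.
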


\noindent
\textbf{Remark.} In [7, Conjecture 4.1], Hu et al conjectured that $\rho(\mathcal{Q}_{G^k})$ is a strictly decreasing sequence for any graph $G$ and even $k$. By Corollary 4.9, this conjecture holds when $G$ is regular of degree $d>1$.

\vspace{3mm}
The proof of the following theorem is similar with that of Theorem \ref{thm4.7}. So we omit it.
\begin{thm}
If $\alpha\neq0$ is an eigenvalue of a $d$-regular graph $G$, then the roots of $(d-x)(1-x)^{\frac{k-2}{2}}-\alpha=0$ are Laplacian eigenvalues of $G^k$.
\end{thm}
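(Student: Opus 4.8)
The plan is to repeat, mutatis mutandis, the construction in the proof of Theorem~\ref{thm4.7}, with the signless Laplacian replaced by the Laplacian and the sign‑sensitive substitutions adjusted accordingly. Fix any root $\lambda$ of $(d-x)(1-x)^{\frac{k-2}{2}}-\alpha=0$. Since $\alpha\neq0$ and $k\geqslant3$ (so $\frac{k-2}{2}>0$), we have $\lambda\neq1$; in particular $(1-\lambda)^{-1/2}$ makes sense. Let $x$ be an eigenvector of the eigenvalue $\alpha$ of $G$, so $\sum_{j\in N_G(i)}x_j=\alpha x_i$ for every $i\in V(G)$. Define a vector $y$ on $V(G^k)$ by $y_u=(x_u)^{\frac{2}{k}}$ for $u\in V(G)$, and $y_u=(1-\lambda)^{-\frac{1}{2}}(x_ix_j)^{\frac{1}{k}}$ when $u$ is a core vertex lying on the edge of $G^k$ whose two non‑core vertices are the adjacent pair $i,j\in V(G)$ (fractional powers of complex numbers taken on any fixed branch).

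Next I would check $\mathcal{L}_{G^k}y=\lambda y^{[k-1]}$ coordinatewise, exactly as in Theorem~\ref{thm4.7}. For $i\in V(G)$, the $k-2$ core vertices on an edge $\{i,j,\dots\}$ contribute a factor $(1-\lambda)^{-\frac{k-2}{2}}(x_ix_j)^{\frac{k-2}{k}}$, so $(\mathcal{A}_{G^k}y)_i=(1-\lambda)^{-\frac{k-2}{2}}(x_i)^{\frac{k-2}{k}}\sum_{j\in N_G(i)}x_j=\alpha(1-\lambda)^{-\frac{k-2}{2}}(y_i)^{k-1}$, whence $(\mathcal{L}_{G^k}y)_i=\bigl(d-\alpha(1-\lambda)^{-\frac{k-2}{2}}\bigr)(y_i)^{k-1}$, and this equals $\lambda(y_i)^{k-1}$ precisely because $(d-\lambda)(1-\lambda)^{\frac{k-2}{2}}=\alpha$. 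For a core vertex $u$ on the edge with non‑core vertices $i,j$, the single edge through $u$ gives $(\mathcal{A}_{G^k}y)_u=(1-\lambda)^{-\frac{k-3}{2}}(x_ix_j)^{\frac{k-1}{k}}$, and since $\deg u=1$, $(\mathcal{L}_{G^k}y)_u=(y_u)^{k-1}-(\mathcal{A}_{G^k}y)_u=(1-\lambda)^{-\frac{k-1}{2}}(x_ix_j)^{\frac{k-1}{k}}\bigl(1-(1-\lambda)\bigr)=\lambda(y_u)^{k-1}$. Hence $y$ is an eigenvector of $\mathcal{L}_{G^k}$ for $\lambda$, so $\lambda$ is a Laplacian eigenvalue of $G^k$.

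There is no genuine obstacle here; the only points needing care are bookkeeping ones, and they are identical to those in Theorem~\ref{thm4.7}: one must make a consistent branch choice for the half‑integer powers (which occur when $k$ is odd, just as in Theorem~\ref{thm4.7}), and one must note that a vertex $i$ with $x_i=0$ causes no problem, since then $y_i=0$ and $y_u=0$ for the core vertices on edges through $i$, and every term on both sides of the eigenequation at such vertices carries a positive power of $x_i$ or of $x_ix_j$, so both sides vanish. In short, the argument is the proof of Theorem~\ref{thm4.7} with $\mathcal{D}_{G^k}+\mathcal{A}_{G^k}$ replaced by $\mathcal{D}_{G^k}-\mathcal{A}_{G^k}$ and $(\lambda-1)$ replaced by $(1-\lambda)$ in the definition of $y$, which is why the proof is omitted.
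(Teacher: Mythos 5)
Your proposal is correct and is precisely the argument the paper has in mind: the paper omits the proof, stating only that it is similar to that of Theorem~\ref{thm4.7}, and your construction (replacing $(\lambda-1)^{-1/2}$ by $(1-\lambda)^{-1/2}$ in the core-vertex coordinates and verifying the eigenequation at both vertex types) is exactly that adaptation. The coordinatewise computations check out, so nothing further is needed.
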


\end{document}